\documentclass[reqno]{amsart}
\usepackage{cite}
\usepackage[russian,english]{babel}
\usepackage{fixltx2e}
\usepackage{amssymb,amsmath,amsfonts,amsthm}
\usepackage[utf8]{inputenc}
\usepackage{cmap}
\usepackage[T1]{fontenc}
\usepackage[all]{xy}
\usepackage{mathbbol}
\usepackage{graphicx}
\usepackage{xcolor}
\usepackage{lineno}

\newcommand{\black}{\color{black}}

\newcommand{\bl}{\color{blue}{[}\black }
\newcommand{\br}{\color{blue}{]}\black }
\newcommand{\rl}{\color{red}{[}\black }
\newcommand{\rr}{\color{red}{]}\black }

\DeclareMathOperator{\Span}{Span}

\newtheorem{theorem}[subsection]{Theorem}
\newtheorem{proposition}[subsection]{Proposition}

\newtheorem{corollary}[subsection]{Corollary}
\theoremstyle{definition}
\newtheorem{definition}[subsection]{Definition}
\newtheorem{example}[subsection]{Example}

\theoremstyle{remark}
\newtheorem{remark}[subsection]{Remark}

\tolerance=5000 \topmargin -0.5cm \oddsidemargin=0.7cm
\evensidemargin=0.7cm \textwidth 16cm \textheight 22.5cm
\begin{document}

\title[Some Leibniz bimodules of $\mathfrak{sl}_2$]{Some Leibniz bimodules of $\mathfrak{sl}_2$}
\author{T. Kurbanbaev}
\address{[Tuuelbay Kurbanbaev] Institute of Mathematics of Uzbek Academy of Sciences, Mirzo Ulugbek 81, 100170 Tashkent, Uzbekistan.}
\email{tuuelbay@mail.ru}
\author{R. Turdibaev}
\address{[Rustam Turdibaev] Inha University in Tashkent, Ziyolilar 9, 100170 Tashkent, Uzbekistan.}
\email{r.turdibaev@inha.uz}

\thanks{The authors were supported by the project \"E$\Phi$A-$\Phi$tex-2018-79.}

\begin{abstract}
    We study complex finite-dimensional Leibniz algebra bimodule over $\mathfrak{sl}_2$ that as a Lie algebra module is split into a direct sum of two simple $\mathfrak{sl}_2$-modules. We prove that in this case there are only two non-split Leibniz $\mathfrak{sl}_2$-bimodules and we describe the actions. 
\end{abstract}
\subjclass[2010]{17A32}
\keywords{Leibniz algebra, Leibniz algebra bimodule}

\maketitle
\section*{Introduction}

Leibniz algebras were discovered by A. Bloh \cite{Bloh}. They are a non skew-symmetric generalization of Lie algebras and got popularity due to their rediscovery by J.–L. Loday \cite{LodayCyclic}. He noticed that a lifting of the classical Chevalley--Eilenberg boundary map to the tensor module of a Lie algebra gives rise to another chain complex. Originally born due to an observation in homology theory, this type of algebras became an object of an independent interest. 

For any Leibniz algebra $L$ there are notions (see \cite{Loday}, \cite{LP_Universal}) as an ideal generated by squares  of the elements $Leib(L)=\text{ideal}\langle [x,x] \mid x\in L\rangle$, also known as the Leibniz kernel and an associated Lie algebra $L/Leib(L)$ called the liezation which plays important role in the structural theory of Leibniz algebras. A notion of a simple Leibniz algebra does not exist in the classical algebraic meaning due to $Leib(L)$ being nontrivial ideal whenever $L$ is a non-Lie Leibniz algebra. However, a number of authors  use it for the Leibniz algebra with simple liezation  and simple ideal generated by the squares, introduced in \cite{Dzumadil'daev}. Therefore, there is a way to describe finite-dimensional simple Leibniz algebras over the complex numbers by a well-known classification of simple Lie algebras and their irreducible representations.

Representation or a bimodule of a Leibniz algebra $L$ is defined in \cite{LP_Universal} as a $\mathbb{K}$-module $M$ with two actions - left and right, satisfying compatibility conditions. It is established \cite{Barnes} that any simple finite-dimensional Leibniz representation is either symmetric, meaning the left and the right actions differ by sign, or antisymmetric, meaning  the left action is zero.  The classical Weyl's theorem on complete reducibility that claims any finite-dimensional module over a semisimple Lie algebra is a direct sum of simple modules does not generalize even for the simple Leibniz algebra case. In Proposition \ref{indecomposable_not_simple} we show that any non-Lie Leibniz algebra admits a Leibniz bimodule, which is neither simple, nor completely reducible. A similar example in dimension five is constructed in \cite{Fialowski}. This shows that the category of Leibniz representations of a Leibniz algebra is not semisimple.

A Lie algebra can be considered as a Leibniz algebra and one can consider Leibniz representation of a Lie algebra. In \cite{LP_Leib_rep} the authors describe the indecomposable objects of the category of Leibniz representations of a Lie algebra and as an example, in case the Lie algebra is $\mathfrak{sl}_2$ the indecomposable objects in that category can be described, whereas for $\mathfrak{sl}_n$ ($n>2$) they claim that it is of wild type. 

Our main focus in this work is to describe the actions of Leibniz representations of the Lie algebra $\mathfrak{sl}_2$ in details. Since a Leibniz bimodule over a Lie algebra satisfies the condition of a Lie module, in our study we use Weyl's complete reducibility to decompose Leibniz bimodule as a vector space into a direct sum of simple $\mathfrak{sl}_2$-modules. Our approach is different from homological approach of \cite{LP_Leib_rep} and uses direct calculations. Due to complexity of computations, we consider only the case when Leibniz bimodule as a vector space is a direct sum of two simple $\mathfrak{sl}_2$-modules and obtain the table of actions of  non-split indecomposable Leibniz $\mathfrak{sl}_2$-bimodules in Theorem \ref{main}. 

All modules and algebras in this work are finite-dimensional over a field $\mathbb{K}$ of characteristic zero. 
\section{Preliminaries}

\begin{definition}
    An algebra $(L,[\cdot,\cdot])$ over a field $\mathbb{K}$ is called a \emph{Leibniz algebra} if it is defined by the Leibniz identity
    $$[x,[y,z]]=[[x,y],z] - [[x,z],y], \ \mbox{for all}\ x,y \in L.$$
\end{definition}

\begin{definition}
    Let $M$ be a $\mathbb{K}$-vector space and bilinear maps $\bl-,-\br:L\times M \rightarrow M$ and $\rl-,-\rr:M\times L \rightarrow M$
    satisfy the following three axioms:
    \begin{align}
    \rl m,[x,y]\rr & =\rl\rl m,x\rr,y\rr-\rl \rl m,y\rr,x\rr,\label{red}\\
    \bl x,\rl m,y\rr\br & =\rl\bl x,m\br,y\rr-\bl [x,y],m\br,\label{bluered}\\
    \bl x,\bl y,m\br\br & =\bl[x,y],m\br-\rl\bl x,m\br,y\rr.\label{blueblue}
    \end{align}
    Then $M$ is called a \textit{representation} of the Leibniz algebra $L$ or an $L$\textit{-bimodule}.
\end{definition}
One can verify that on a direct sum $L\oplus M$ of vectors spaces the bracket
\begin{center}
	$\textbf{[}l+m, l'+m'\textbf{]}:=[l,l']+\rl m,l'\rr+\bl l,m'\br$
\end{center} 
defines a Leibniz algebra structure. Moreover, $L$ becomes a subalgebra, while $M$ becomes an ideal with $\textbf{[}M,M\textbf{]}=0$.
Note that, adding the identities $(\ref{bluered})$ and $(\ref{blueblue})$ one gets:
\begin{equation}
{\color{blue}[} x, {\color{red}[} m,y {\color{red}]} + {\color{blue}[} y,m
{\color{blue}]} {\color{blue}]} = 0.\label{blue}
\end{equation}
Consider a Lie algebra $\mathfrak{g}$ as a Leibniz algebra and let $M$ be its Leibniz representation. For the sake of convenience, we call a Leibniz algebra bimodule $M$ simply a bimodule $M$, and a Lie algebra module $N$, a $\mathfrak{g}$-module $N$. Note that, due to identity (\ref{red}) for a bimodule $M$, the bilinear map  $\rl -,-\rr : M\times \mathfrak{g} \to M$  is a right $\mathfrak{g}$-module action. In fact, a $\mathfrak{g}$-module with a left action satisfying (\ref{bluered}) and (\ref{blue}) is a Leibniz $\mathfrak{g}$-bimodule and the converse is also true. 

\begin{definition} Leibniz algebra $L$ is said to be \textit{simple} if the only ideals of $L$ are $\{0\},Leib(L),L$ and $[L,L]\neq Leib(L).$ 
\end{definition}
Obviously, in the case when Leibniz algebra is a Lie algebra, the ideal $Leib(L)$ is equal to zero. Therefore, this definition agrees with the definition of a simple Lie algebra.
Here is an example of simple Leibniz algebra from \cite{Dzumadil'daev}.

\begin{example}\label{simpleLeibniz} Let $\mathfrak{g}$ be a simple Lie algebra and $M$ be a simple right $\mathfrak{g}$-module. Then the vector space $L=\mathfrak{g}\oplus M$ equipped with the multiplication $\textbf{[}x+m,y+n\textbf{]}=[x,y]+\rl m,y\rr$, where $m,n \in M, \, x,y\in \mathfrak{g}$ is a simple Leibniz algebra.
\end{example}
\noindent Note that any finite-dimensional simple Leibniz algebra admits the construction of the Example \ref{simpleLeibniz}. 
\begin{definition}
	An $L$-bimodule is called \textit{simple} or \textit{irreducible}, if it does not admit non-trivial $L$-subbimodules. An $L$-bimodule is called \textit{indecomposable}, if it is not a direct sum of its $L$-subbimodules. An $L$-bimodule $M$ is called \textit{completely reducible} if for any $L$-subbimodule $N$ there exists a complementing $L$-subbimodule $N'$ such that $M=N\oplus N'$.
\end{definition}

\begin{proposition}\label{complete_reduciblity=complemented} An $L$-bimodule is a direct sum of simple $L$-submodules if and only if it is completely reducible.  
\end{proposition}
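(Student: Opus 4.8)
The plan is to prove both implications by induction on $\dim M$, relying only on formal properties of $L$-subbimodules: that sums and intersections of $L$-subbimodules are again $L$-subbimodules, and that any nonzero $L$-subbimodule of a simple bimodule is the whole bimodule. Since everything is finite-dimensional, no transfinite (Zorn's lemma) argument is needed, and a dimension induction replaces it.

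For the implication ``completely reducible $\Rightarrow$ direct sum of simple $L$-subbimodules'' I would induct on $\dim M$, the case $M=0$ being the empty direct sum. If $M\neq 0$, pick an $L$-subbimodule $S\subseteq M$ of least positive dimension; minimality of $\dim S$ forces $S$ to be simple. By complete reducibility write $M=S\oplus M'$. The key point is that $M'$ again satisfies the complete reducibility condition: given an $L$-subbimodule $N'\subseteq M'$, choose a complement $N$ of $N'$ in $M$; then a short modular-law computation shows $M'=N'\oplus(N\cap M')$, so $N\cap M'$ is a complement of $N'$ inside $M'$. As $\dim M'<\dim M$, induction yields $M'=\bigoplus_j S_j$ with each $S_j$ simple, hence $M=S\oplus\bigoplus_j S_j$ is a direct sum of simple $L$-subbimodules.

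For the converse, suppose $M=\bigoplus_{i=1}^{n}S_i$ with each $S_i$ simple, and let $N\subseteq M$ be an arbitrary $L$-subbimodule. Among the subsets $J\subseteq\{1,\dots,n\}$ satisfying $N\cap\bigoplus_{j\in J}S_j=0$ choose one of maximal cardinality, and set $P=\bigoplus_{j\in J}S_j$; then $N\cap P=0$ by construction and $N+P$ is again an $L$-subbimodule. It remains to show $N+P=M$, for which it suffices that $S_i\subseteq N+P$ for every $i$. This is clear for $i\in J$. For $i\notin J$ the sum $P+S_i$ is direct (being a subsum of $\bigoplus_i S_i$), so maximality of $J$ forces $N\cap(P\oplus S_i)\neq 0$; writing a nonzero element as $x=p+s$ with $p\in P$, $s\in S_i$, one sees $s\neq 0$ (otherwise $x\in N\cap P=0$) and $s=x-p\in N+P$, so $S_i\cap(N+P)$ is a nonzero $L$-subbimodule of the simple bimodule $S_i$ and hence equals $S_i$. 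Therefore $M=\sum_i S_i\subseteq N+P$, giving $M=N\oplus P$, so $M$ is completely reducible.

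The only step needing a little care — though it is entirely routine — is the inheritance of the complete reducibility property by the subbimodule $M'$ in the first implication, i.e.\ the direct-sum bookkeeping behind $M'=N'\oplus(N\cap M')$; the simplicity arguments and the closure of $L$-subbimodules under sums and intersections are immediate from the axioms, so I do not expect any genuine obstacle.
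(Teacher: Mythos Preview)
Your proof is correct. The ``completely reducible $\Rightarrow$ direct sum of simples'' direction matches the paper's argument: both pick a simple subbimodule, split it off, and iterate; you simply make explicit the inheritance of complete reducibility by the complement, which the paper leaves implicit.

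For the other direction you take a genuinely different (and better) route. The paper argues by looking at each intersection $N\cap M_i$: simplicity forces either $M_i\subseteq N$ or $N\cap M_i=0$, and the paper then asserts that $N'=\bigoplus_{j\in J}M_j$ (with $J$ the set of indices where the intersection is zero) is a complement of $N$. But this tacitly assumes $N=\bigoplus_{i\in I}M_i$, which can fail: if $M=M_1\oplus M_2$ with $M_1\cong M_2$ simple and $N$ is the diagonal copy, then $N\cap M_1=N\cap M_2=0$, so $I=\emptyset$, yet $N\neq 0$ and $N'=M$ is certainly not a complement. Your argument---choosing $J$ maximal with $N\cap\bigoplus_{j\in J}S_j=0$ and then using simplicity of each $S_i$ to force $S_i\subseteq N+P$---is the standard semisimplicity argument and handles such diagonal subbimodules correctly. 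So your approach not only differs from the paper's but in fact closes a gap in it.
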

\begin{proof} Assume that $M=\oplus_{i=1}^n M_i$, where $M_i$ are simple $L$-subbimodules of $M$. Let $N$ be a non-trivial $L$-subbimodule of $M$ and consider $N\cap M_i$. Due to simplicity of $M_i$ either $N\cap M_i=\{0\}$ or $M_i\subseteq N$. Going through all $i\in \{1,\dots, n\}$ we obtain $\oplus_{i\in I}M_i\subseteq N$ for some non-empty subset $I\subseteq \{1,\dots, n\}$ and $M_j\cap N=\{0\}$ for all $j\in J=\{1,\dots n \}\setminus I$. Then $N'=\oplus_{j\in J}M_j$ is the  $L$-submodule of $M$ complementary to $N$. 
	
	Conversely, if $M$ is completely reducible, then either $M$ is simple or has a $L$-subbimodule, which is simple. Continuing the process for the complement of the simple $L$-subbimodule we obtain the direct sum decomposition of $M$. 
\end{proof}

Obviously, a simple $L$-bimodule is indecomposable. The converse holds for  semisimple  Lie algebras and Lie algebra modules.

\begin{theorem}(\cite{Jacobson})\label{completelyreducible} If $\mathfrak{g}$ is a finite-dimensional semi-simple Lie algebra over a field of characteristic 0, then every finite-dimensional module over $\mathfrak{g}$ is completely reducible.
\end{theorem}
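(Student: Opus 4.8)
The plan is to give the standard algebraic proof of this result (Weyl's complete reducibility theorem), resting on two ingredients: the Casimir element of a representation, and a reduction of the general splitting problem to the case of a codimension-one submodule. Throughout one identifies $\mathfrak{g}$-modules with modules over the universal enveloping algebra $U(\mathfrak{g})$. The one genuinely substantial external input is Cartan's criterion: for any representation $\phi\colon\mathfrak{g}\to\mathfrak{gl}(V)$ the trace form $\beta_\phi(x,y)=\operatorname{tr}(\phi(x)\phi(y))$ is nondegenerate modulo $\ker\phi$, and since $\mathfrak{g}$ is semisimple $\ker\phi$ splits off as an ideal, so after replacing $\mathfrak{g}$ by a complementary ideal one may assume $\beta_\phi$ nondegenerate. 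This is the only place semisimplicity is used in an essential way, and it is the step I expect to be the crux; everything afterwards is formal.

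First I would recall the Casimir construction: given $\phi$ with $\beta_\phi$ nondegenerate, choose a basis $\{x_i\}$ of $\mathfrak{g}$ with $\beta_\phi$-dual basis $\{x^i\}$ and set $c_\phi=\sum_i\phi(x_i)\phi(x^i)\in\operatorname{End}(V)$. Using invariance of $\beta_\phi$ one checks $c_\phi$ commutes with $\phi(\mathfrak{g})$, hence is an endomorphism of every submodule and quotient; moreover $\operatorname{tr}(c_\phi)=\sum_i\beta_\phi(x_i,x^i)=\dim\mathfrak{g}\neq0$. Consequently, if $V$ is irreducible and nontrivial then Schur's lemma forces $c_\phi$ to act as the nonzero scalar $(\dim\mathfrak{g})/(\dim V)$, while on a trivial module $c_\phi=0$; also note that every one-dimensional $\mathfrak{g}$-module is trivial because $\mathfrak{g}=[\mathfrak{g},\mathfrak{g}]$.

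Next I would handle the codimension-one case: for an exact sequence $0\to W\to V\to\mathbb{K}\to0$, the submodule $W$ admits a $\mathfrak{g}$-module complement. Induct on $\dim W$. If $W$ has a proper nonzero submodule $W'$, push the sequence down to $V/W'$, split off a line there by induction, pull it back to a submodule $\widetilde V$ with $W'$ of codimension one in $\widetilde V$, and split $W'$ inside $\widetilde V$ by induction again; the resulting line complements $W$ in $V$. So assume $W$ irreducible. If $\mathfrak{g}$ acts trivially on $W$, then $\dim W=1$, so $\mathfrak{g}$ acts on $V$ by strictly upper-triangular $2\times2$ matrices; the image of $\mathfrak{g}$ in $\mathfrak{gl}(V)$ is then abelian, hence equals its own derived algebra and so is zero, and any line complements $W$. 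If $\mathfrak{g}$ acts nontrivially on $W$, let $c$ be the Casimir element of the action on $W$; it extends to a $\mathfrak{g}$-endomorphism of $V$, acts invertibly on $W$ and trivially on the quotient $V/W$, so $\ker(c|_V)$ is a one-dimensional submodule meeting $W$ trivially — the complement.

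Finally, for an arbitrary submodule $W\subseteq V$, make $\operatorname{Hom}_{\mathbb{K}}(V,W)$ a $\mathfrak{g}$-module via $(x\cdot f)(v)=x\cdot f(v)-f(x\cdot v)$, and inside it consider the submodules $\mathcal{V}=\{f: f|_W\in\mathbb{K}\,\operatorname{id}_W\}$ and $\mathcal{W}=\{f: f|_W=0\}$, where $\mathcal{W}$ has codimension one in $\mathcal{V}$. By the codimension-one case just proved, $\mathcal{W}$ has a complementary line in $\mathcal{V}$, spanned by some $f$ with $f|_W=\operatorname{id}_W$ and $x\cdot f=0$ for all $x\in\mathfrak{g}$; such an $f$ is precisely a $\mathfrak{g}$-module retraction of the inclusion $W\hookrightarrow V$, so $\ker f$ is the sought complement. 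Iterating this on complements yields the decomposition of any finite-dimensional module into simple submodules. (Alternatively, over $\mathbb{C}$ one could run Weyl's original unitarian trick — integrate an inner product over a simply connected compact real form and take orthogonal complements — and then pass to an arbitrary field of characteristic zero by the Lefschetz principle; but the Casimir argument above is cleaner and avoids any analysis.)
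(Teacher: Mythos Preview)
Your proof is correct: this is the standard Casimir-element argument (often attributed to Brauer or to the cohomological formulation via Whitehead's first lemma), and all the steps are sound. One minor phrasing issue: when you say the Casimir element ``of the action on $W$ \dots\ extends to a $\mathfrak{g}$-endomorphism of $V$,'' what you mean is that $c_\phi$ lives in $U(\mathfrak{g})$ (or at least in the image of $U(\mathfrak{g})$ in $\operatorname{End}(V)$) and therefore automatically acts on $V$, not merely on $W$; this is fine once stated that way.

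As for comparison with the paper: there is nothing to compare. The paper does not prove this theorem at all --- it is quoted as a classical result with a citation to Jacobson and is used only as background for the study of Leibniz bimodules over $\mathfrak{sl}_2$. Your write-up is essentially the proof one finds in Jacobson or Humphreys, so it is entirely appropriate as a proof for the cited statement.
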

Similar result does not hold for Leibniz algebras. An adjoint bimodule of a five-dimensional Leibniz algebra which is not a direct sum of simple bimodules is constructed in \cite{Fialowski}. Interestingly, the following generalization is true.  
\begin{proposition}\label{indecomposable_not_simple}
	Let $L$ be a non-Lie Leibniz algebra. The adjoint $L$-bimodule is neither completely reducible, nor simple. 
\end{proposition}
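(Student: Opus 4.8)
The plan is to make $L$ itself into an $L$-bimodule — the adjoint bimodule — with both the left and the right action given by the bracket of $L$, and then to locate inside it a proper non-zero subbimodule that admits no complement. First I would check that this adjoint action really is a bimodule: writing $\langle m,x\rangle:=[m,x]$ for the right action and $[x,m]$ for the left action, the axioms (\ref{red}), (\ref{bluered}) and (\ref{blueblue}) are nothing but the Leibniz identity evaluated at the triples $(m,x,y)$, $(x,m,y)$ and $(x,y,m)$. Since a subspace of the adjoint bimodule is a subbimodule precisely when it is a two-sided ideal of $L$, Proposition \ref{complete_reduciblity=complemented} reduces the whole statement to showing that $Leib(L)$ is a proper non-zero ideal of $L$ (this rules out simplicity) which has no complementary ideal (this rules out complete reducibility).

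The key lemma I would establish is that $[L,Leib(L)]=0$. To see it, consider $J=\{a\in L:[x,a]=0\text{ for all }x\in L\}$; this is a two-sided ideal, because for $a\in J$ and $x\in L$ the Leibniz identity gives $[z,[a,x]]=[[z,a],x]-[[z,x],a]=0$ for every $z$, while $[x,a]=0\in J$ trivially, and $J$ contains every square since $[x,[y,y]]=[[x,y],y]-[[x,y],y]=0$. As $Leib(L)$ is the ideal generated by the squares, $Leib(L)\subseteq J$, i.e.\ $[L,Leib(L)]=0$. This already forces $Leib(L)\neq L$: otherwise $[L,L]=[L,Leib(L)]=0$, so $L$ would be abelian and hence $Leib(L)=0$, contradicting the non-Lie hypothesis; and the same hypothesis gives $Leib(L)\neq 0$. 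So $0\subsetneq Leib(L)\subsetneq L$, and the adjoint bimodule is not simple.

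For the failure of complete reducibility I would argue by contradiction via Proposition \ref{complete_reduciblity=complemented}: suppose there is a subbimodule, that is, an ideal $N$, with $L=Leib(L)\oplus N$. Then all cross-brackets vanish — $[N,Leib(L)]\subseteq[L,Leib(L)]=0$ by the lemma, and $[Leib(L),N]\subseteq Leib(L)\cap N=0$ since both summands are ideals — so $L=Leib(L)\oplus N$ is in fact a direct product of Leibniz algebras. The squares of elements of $N$ land in $N\cap Leib(L)=0$, so $N$ is a Lie algebra, and for a direct product one has $Leib(L)=Leib\big(Leib(L)\oplus N\big)=Leib(Leib(L))\oplus Leib(N)=Leib(Leib(L))$. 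Hence the Leibniz algebra $A:=Leib(L)$ satisfies $Leib(A)=A$; but running the argument of the previous paragraph for $A$ forces $[A,A]=[A,Leib(A)]=0$, so $A$ is abelian and $A=Leib(A)=0$, contradicting $Leib(L)\neq 0$.

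I expect the only genuinely substantive point to be the lemma $[L,Leib(L)]=0$ together with the consequence extracted from it — that a non-zero Leibniz algebra never coincides with its own Leibniz kernel — which gets used twice, once for $L$ and once for the factor $Leib(L)$ of the hypothetical splitting. Everything else is routine manipulation of ideals and direct products, and in particular this route needs neither Weyl's theorem nor any classification of simple Leibniz bimodules.
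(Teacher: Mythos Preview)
Your proof is correct and follows the same strategy as the paper's: exhibit $Leib(L)$ as a proper nonzero subbimodule of $L^{ad}$ (hence not simple) that admits no complementary ideal (hence not completely reducible, via Proposition~\ref{complete_reduciblity=complemented}). The paper asserts tersely that a complement would force $L$ to be a Lie algebra, whereas you make the underlying lemma $[L,Leib(L)]=0$ explicit and reach the contradiction through $Leib(Leib(L))=Leib(L)$ --- a minor variation that has the merit of spelling out what the paper leaves implicit.
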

\begin{proof}
	Denote by $L^{ad}$ the adjoint $L$-bimodule with the underlying vector space $L$. Clearly, ideal of $L$ is equivalent to an $L$-subbimodule of $L^{ad}$ and therefore the Leibniz kernel $Leib(L)$ is an $L$-subbimodule of $L^{ad}$. Thus, $L^{ad}$ is not a simple $L$-bimodule. 
	
	Note that the Leibniz kernel $Leib(L)$ is not a direct summand of $L^{ad}$. Indeed, the liezation $L/Leib(L)$ is the compliment of $Leib(L)$ as a vector space, but cannot be an ideal of $L$. Otherwise $L$ is a Lie algebra, which is a contradiction.  By Proposition \ref{complete_reduciblity=complemented} it follows that $L^{ad}$ is not completely reducible. 
\end{proof}

Obviously, a simple $L$-bimodule is indecomposable, while the converse is not necessarily true by Proposition \ref{indecomposable_not_simple}. Indeed, for a simple Leibniz algebra $L$, the adjoint $L$-bimodule $L^{ad}$ admits only one nontrivial submodule - the Leibniz kernel $Leib(L)$. Hence, $L^{ad}$ is indecomposable, but not simple. 


\

Our goal in this work is to describe some indecomposable Leibniz bimodules of a Lie algebra $\mathfrak{sl}_2$ over a field of characteristic zero. Let $\{e,f,h\}$ be a basis of $\mathfrak{sl}_2$ in which it admits the following products:
\begin{center}
	$\begin{array}{lll}
[e,f]=h, & [e,h]=2e, & [f,h]=-2f.
\end{array}$
\end{center}

\begin{theorem}\label{irreducible}(\cite{Jacobson}) For each integer $m=0,1,2,\dots$ there exists one and, in the sense of isomorphism, only one irreducible $\mathfrak{sl}_2$-module $M$ of dimension $m+1.$ The module $M$ has a basis $\{x_0,x_1,\dots,x_m\}$ such that the representing transformations $E,F$ and $H$ corresponding to the canonical basis $\{e,f,h\}$ are given by:
	\begin{center}
		$\begin{array}{l}
	H(x_k)=(m-2k)x_k, \ k=0, \dots, m, \\
	F(x_m)=0, \ F(x_k)=x_{k+1}, \ k=0, \dots, m-1,  \\
	E(x_0)=0, \ E(x_k)=-k(m+1-k)x_{k-1}, \ k=1, \dots, m.\\
	\end{array}$
\end{center}
\end{theorem}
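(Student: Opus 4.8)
The plan is to give the classical highest--weight argument, in two independent parts: existence and uniqueness.

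\textbf{Existence.} For a fixed integer $m\ge 0$ I would take the $(m+1)$--dimensional space $M_m$ with basis $x_0,\dots,x_m$, \emph{define} linear operators $H,F,E$ by the three displayed formulas (reading $x_{-1}=x_{m+1}=0$), and verify by a direct computation on basis vectors that they obey the commutation relations making $M_m$ an $\mathfrak{sl}_2$--module (with these formulas one gets $[H,E]=2E$, $[H,F]=-2F$ and $EF-FE=-H$). For irreducibility I would note that a nonzero submodule $N$ is $H$--invariant; since the eigenvalues $m-2k$ $(0\le k\le m)$ of $H$ are pairwise distinct, $N$ is the span of a nonempty set of $x_k$'s; and since $F(x_k)=x_{k+1}$ while $E(x_k)=-k(m+1-k)x_{k-1}$ with $-k(m+1-k)\ne 0$ for $1\le k\le m$, any basis vector lying in $N$ pulls all the others into $N$, whence $N=M_m$.

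\textbf{Uniqueness.} Let $M$ be an irreducible $\mathfrak{sl}_2$--module with $\dim M=n$. First I would produce a \emph{maximal vector}: $H$ has an eigenvector $v$ (over $\mathbb{C}$, or after extension of scalars, this is automatic), and since $E$ shifts $H$--eigenvalues by the fixed nonzero scalar $2$, the nonzero terms of $v,Ev,E^2v,\dots$ are eigenvectors for distinct eigenvalues, hence finite in number; the last nonzero one is a vector $v_0\ne 0$ with $E(v_0)=0$ and $H(v_0)=\mu v_0$. Put $v_k=F^k(v_0)$. An induction using $EF=FE-H$ gives $H(v_k)=(\mu-2k)v_k$ and $E(v_k)=c_k v_{k-1}$, where $c_k=-k(\mu+1-k)$ is a polynomial in $\mu$ and $k$. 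The nonzero $v_k$ are linearly independent and their span is a submodule, hence equals $M$; so exactly $v_0,\dots,v_{n-1}$ are nonzero and $v_n=0$. Applying $E$ to $v_n=0$ gives $0=c_n v_{n-1}=-n(\mu+1-n)v_{n-1}$, and since $v_{n-1}\ne 0$ and the characteristic of $\mathbb{K}$ is zero, this forces $\mu=n-1=:m$. Rescaling the $v_k$ so that $F(v_k)=v_{k+1}$ turns all three actions into exactly the displayed ones, so $M\cong M_m$; thus for a given $m$ there is, up to isomorphism, exactly one irreducible module of dimension $m+1$.

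\textbf{Where the difficulty lies.} There is no single hard step, but two points deserve care: (i) the existence of a maximal vector presumes the ground field contains the eigenvalues of $H$, which one arranges by passing to $\overline{\mathbb{K}}$ and then checking that the modules so obtained are already defined over $\mathbb{K}$ (indeed over $\mathbb{Q}$); and (ii) the ladder coefficients $c_k$ must be tracked exactly, since it is precisely the vanishing of the characteristic that makes the equation $c_n=0$ yield $\mu=m$ rather than a spurious root. Everything else is a finite verification.
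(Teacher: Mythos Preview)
The paper does not prove this theorem at all; it is quoted verbatim from Jacobson and used as a black box. Your argument is the classical highest--weight proof (essentially Jacobson's), and it is correct.

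One small point worth making explicit for consistency with the paper: with the bracket conventions $[e,f]=h$, $[e,h]=2e$, $[f,h]=-2f$, the displayed operators satisfy $EF-FE=-H$, $HE-EH=2E$, $HF-FH=-2F$, i.e.\ the \emph{opposite} relations to the Lie bracket. You computed these signs correctly but did not say why they are the right ones: the paper works with \emph{right} $\mathfrak{sl}_2$--modules throughout (identity~(\ref{red})), so $x\mapsto X$ is an anti-homomorphism, and these are exactly the relations one must check. This does not affect the validity of your argument, only its phrasing.
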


Consider a finite-dimensional Leibniz representation $M$ of $\mathfrak{sl}_2$. As a right $\mathfrak{sl}_2$-module, by Theorem \ref{completelyreducible} it is completely reducible into a direct sum of simple $\mathfrak{sl}_2$-modules $V_1\oplus\dots \oplus V_k$.  In the case $k=1$, obviously, $M$ becomes a simple bimodule. They are mentioned in \cite{LP_Leib_rep} as follows.

\begin{theorem} There exist two types of simple Leibniz bimodules over a Lie algebra $\mathfrak{g}$. The right action in both is a simple right $\mathfrak{g}$-module action, while the left action is either trivial or is the negative of the right action. 	
\end{theorem}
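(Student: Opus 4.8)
The plan is to analyze a simple Leibniz $\mathfrak{g}$-bimodule $M$ using the structural results already assembled in the preliminaries. Recall that, by identity~(\ref{red}), the right action makes $M$ a right $\mathfrak{g}$-module, and since $\mathfrak{g}$ is semisimple this right-module structure is completely reducible by Theorem~\ref{completelyreducible}. So first I would observe that if $M$ is a \emph{simple bimodule} but its underlying right $\mathfrak{g}$-module decomposes as $V_1\oplus\dots\oplus V_k$ with $k\ge 2$, one needs to rule this out by producing a proper subbimodule; conversely if $k=1$ the right action is already simple, which is half of the claim. The heart of the matter is therefore to pin down the left action once we know the right action is a single simple module.

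Second, I would use the symmetric/antisymmetric dichotomy for simple Leibniz bimodules attributed to Barnes in the introduction: a simple finite-dimensional Leibniz bimodule is either \emph{antisymmetric}, meaning $\bl x,m\br=0$ for all $x\in\mathfrak{g}$, $m\in M$, or \emph{symmetric}, meaning $\bl x,m\br=-\rl m,x\rr$. In the antisymmetric case there is nothing left to check: the left action is trivial and we must only confirm that the axioms~(\ref{bluered}) and~(\ref{blue}) are automatically satisfied (they are, since every term vanishes once the left action is zero), so any simple right $\mathfrak{g}$-module gives a simple antisymmetric bimodule. In the symmetric case, I would substitute $\bl x,m\br=-\rl m,x\rr$ into~(\ref{bluered}) and~(\ref{blue}) and check directly that these reduce to the right-module axiom plus the Lie bracket relation, hence hold automatically; and conversely I would verify that this $M$ is simple as a bimodule precisely because it is simple as a right module (any subbimodule is in particular a right submodule). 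This establishes that both ``types'' genuinely occur and that the right action is forced to be simple.

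The main obstacle, and the step I would spend the most care on, is proving the dichotomy itself rather than merely invoking it — i.e.\ showing that for a \emph{simple} bimodule the left action is constrained to be either $0$ or $-\rl-,-\br$. The argument I would give: consider the linear map $\varphi\colon M\to M$, and more precisely the subspace $M_0=\{\,m\in M : \rl m,x\rr + \bl x,m\br = 0 \text{ for all } x\in\mathfrak{g}\,\}$ together with $M_1=\{\,m\in M : \bl x,m\br=0 \text{ for all }x\in\mathfrak{g}\,\}$. Using~(\ref{blue}) one checks that $\bl x, m\br \in M$ actually lands in a controlled place; the key computation is that for any $m$, the element $\rl m,x\rr+\bl x,m\br$ behaves well under the right action (this follows from combining~(\ref{red}),~(\ref{bluered}),~(\ref{blueblue})), so that $M_0$ and $M_1$ are right $\mathfrak{g}$-submodules, hence subbimodules once one also checks closure under the left action. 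Simplicity of $M$ then forces each of $M_0,M_1$ to be $\{0\}$ or $M$; analyzing the four combinations (and noting $M_0\cap M_1$ corresponds to $m$ with $\rl m,x\rr=0$ too, which would make that part a trivial bimodule) yields exactly the two stated possibilities. I would present this as a short lemma extracted from~\cite{Barnes}, and then the theorem follows by assembling the pieces above.
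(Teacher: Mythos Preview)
The paper does not actually prove this theorem; it is stated with attribution to \cite{LP_Leib_rep} (and the dichotomy itself is credited to \cite{Barnes}). So there is no in-paper proof to compare against, and your proposal should be judged on its own.

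There is a genuine gap in your first paragraph: the theorem is stated for an \emph{arbitrary} Lie algebra $\mathfrak{g}$, so you cannot invoke Theorem~\ref{completelyreducible} (Weyl) to decompose $M$ as a right $\mathfrak{g}$-module. Even if one restricts to semisimple $\mathfrak{g}$, the plan ``decompose $M=V_1\oplus\dots\oplus V_k$ as right modules and then produce a proper subbimodule'' does not work as stated, since a right summand $V_i$ need not be closed under the left action. Fortunately this whole paragraph is unnecessary: simplicity of the right action falls out \emph{after} the dichotomy, not before, because in both the symmetric and the antisymmetric case the left action is determined by the right one, so subbimodules coincide with right $\mathfrak{g}$-submodules.

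Your third paragraph contains the right idea but is more complicated than needed. Only $M_1=\{m\in M:\bl x,m\br=0\text{ for all }x\in\mathfrak{g}\}$ is required. Identity~(\ref{bluered}) gives $\bl x,\rl m,y\rr\br=\rl\bl x,m\br,y\rr-\bl[x,y],m\br$, so $M_1$ is stable under the right action; it is trivially stable under the left action, hence a subbimodule. Simplicity forces $M_1=0$ or $M_1=M$. If $M_1=M$ the left action vanishes. If $M_1=0$, identity~(\ref{blue}) says $\rl m,y\rr+\bl y,m\br\in M_1=0$ for all $m,y$, i.e.\ $\bl y,m\br=-\rl m,y\rr$. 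In either case a subbimodule is exactly a right submodule, so $M$ is simple as a right $\mathfrak{g}$-module. The auxiliary subspace $M_0$ and the four-case analysis can be dropped.
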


An $L$-bimodule with trivial left actions is called \textit{symmetric}. If the left action is the negative of the right action, then it is called \textit{antisymmetric}. Note that, a finite-dimensional simple $L$-bimodule is either symmetric or antisymmetric for any finite-dimensional Leibniz algebra $L$\cite[Lemma 1.9]{Barnes}.  

\

In this work we consider the case $k=2$ and our goal is to build extensions of simple object by another simple object in the category of finite-dimensional $\mathfrak{sl}_2$-bimodules.  Let $M$ be a direct sum of simple $\mathfrak{sl}_2$-modules $V$ and $W$. By Theorem \ref{irreducible}, there exist bases $\{v_0,v_1,\dots,v_{n}\}$ of $V$ and $\{w_0,w_1,\dots,w_m\}$ of $W$ such that
\begin{align*}
{\color{red}[}v_i , h{\color{red}]} &= (n-2i)v_i  & {\color{red}[}w_j, h{\color{red}]} &= (m-2j)w_j \\ 
{\color{red}[}v_{i} ,f{\color{red}]} &=  v_{i+1}  & {\color{red}[}w_{j},f{\color{red}]} &= w_{j+1}\\ 
{\color{red}[}v_{i} ,e{\color{red}]}& = -i(n-i+1)v_{i-1} & {\color{red}[}w_{j},e{\color{red}]} &= -j(m-j+1)w_{j-1}
\end{align*}
for all $0\leq i \leq n, \ 0\leq j \leq m$. Throughout the article we assume that $v_i=0$ and $w_j=0$ for all other values of $i$ and $j$. 

Since $V$ and $W$ do not have to be Leibniz $\mathfrak{sl}_2$-modules, we have
$$
\begin{array}{lll}
\bl h,v_i\br=\sum\limits_{j=0}^{n} \eta_{ij}^{11} v_j +\sum\limits_{j=0}^{m} \eta_{ij}^{12} w_j &\hspace*{1cm} \bl h,w_i\br =\sum\limits_{j=0}^{n} \eta_{ij}^{21} v_j +\sum\limits_{j=0}^{m} \eta_{ij}^{22} w_j\\ [2mm]
\bl f,v_i\br =\sum\limits_{j=0}^{n} \phi_{ij}^{11} v_j +\sum\limits_{j=0}^{m} \phi_{ij}^{12} w_j  &\hspace*{1cm} \bl f,w_i\br =\sum\limits_{j=0}^{n} \phi_{ij}^{21} v_j +\sum\limits_{j=0}^{m} \phi_{ij}^{22} w_j\\  [2mm]
\bl e,v_i\br =\sum\limits_{j=0}^{n} \epsilon_{ij}^{11} v_j +\sum\limits_{j=0}^{m} \epsilon_{ij}^{12} w_j &\hspace*{1cm} \bl e,w_i\br =\sum\limits_{j=0}^{n} \epsilon_{ij}^{21} v_j +\sum\limits_{j=0}^{m} \epsilon_{ij}^{22} w_j \\
\end{array}
$$
for some coefficients $\eta_{ij}^{11},\eta_{ij}^{12},\eta_{ij}^{21}, \eta_{ij}^{22}, 
\phi_{ij}^{11},\phi_{ij}^{12},\phi_{ij}^{21}, \phi_{ij}^{22}, 
\epsilon_{ij}^{11},\epsilon_{ij}^{12},\epsilon_{ij}^{21}, \epsilon_{ij}^{22}\in \mathbb{K}$. In the following sections we verify identities $(\ref{bluered})$ and $(\ref{blue})$ to find that many of those coefficients vanish. 

\section{Results from identity (\ref{bluered})}

Below we present the identities that are used in this section derived from the multiplication of $\mathfrak{sl}_2$ and identity (\ref{bluered}) for $v\in M$:
\begin{subequations}
	\renewcommand{\theequation}{5.\arabic{equation}}
\begin{align}
\label{[h,[m,h]]}
\bl h,\rl v,h \rr \br  & = \rl\bl h,v \br ,h\rr-\bl[h,h],v\br =\rl\bl h,v\br,h\rr
\\
 \label{[f,[m,h]]}
\bl f,\rl v,h\rr \br & = \rl\bl f,v\br ,h\rr-\bl[f,h],v\br = \rl\bl f,v\br ,h\rr+2\bl f,v\br
\\
 \label{[e,[m,h]]}
\bl e,\rl v,h\rr \br & = \rl\bl e,v\br ,h\rr-\bl[e,h],v\br =\rl\bl e,v\br ,h\rr-2\bl e,v\br
\\
\label{[f,[m,f]]}
\bl f,\rl v,f\rr \br & = \rl\bl f,v\br ,f\rr-\bl[f,f],v\br =\rl\bl f,v\br ,f\rr
\\
 \label{[e,[m,e]]}
\bl e,\rl v,e\rr \br & = \rl\bl e,v\br ,e\rr-\bl[e,e],v\br =\rl\bl e,v\br ,e\rr
\\
 \label{[f,[m,e]]}
\bl f,\rl v,e\rr \br & = \rl\bl f,v]\br ,e\rr-\bl[f,e],v\br  =\rl\bl f,v\br ,e\rr+\bl h,v\br
\\
\label{[h,[m,f]]}
\bl h,\rl v,f\rr \br & = \rl\bl h,v\br ,f\rr-\bl[h,f],v\br =\rl\bl h,v\br ,f\rr-2\bl f,v\br
\\
\label{[h,[m,e]]}
\bl h,\rl v,e\rr \br & = \rl\bl h,v\br ,e\rr-\bl[h,e],v\br =\rl\bl h,v\br ,e\rr+2\bl e,v\br
\end{align}
\end{subequations}
Without loss of generality we assume that $n\geq m$ and set $ \ell=\frac12(n-m)$.

\begin{proposition} \label{proposition1} The following holds:
 \begin{equation}\label{at_most_2_coefficients}
    \begin{array}{lllllll}
    \bl h, v_i \br = \eta_{i}^{11}v_i +\eta^{12}_{i-\ell}w_{i-\ell}, & \bl h, w_j\br = \eta_{j+\ell}^{21}v_{j+\ell} +\eta_{j}^{22}w_j,\\  
    \bl f, v_i \br = \phi_{i+1}^{11}v_{i+1} +\phi^{12}_{i+1-\ell}w_{i+1-\ell}, & \bl f, w_j\br = \phi_{j+1+\ell}^{21}v_{j+1+\ell} +\phi_{j+1}^{22}w_{j+1},\\ 
    \bl e, v_i \br = \epsilon_{i-1}^{11}v_{i-1} +\epsilon^{12}_{i-1-\ell}w_{i-1-\ell},  & \bl e,w_j\br =\epsilon_{j-1+\ell}^{21}v_{j-1+\ell} +\epsilon_{j-1}^{22}w_{j-1}, \\
    \end{array}
 \end{equation}
where $\eta^{12}_{i-\ell}, \eta_{j+\ell}^{21},  \phi^{12}_{i+1-\ell}, \phi_{j+1+\ell}^{21}, \epsilon^{12}_{i-1-\ell},\epsilon_{j-1+\ell}^{21}$ are zero if $\ell$ is not an integer. 
\end{proposition}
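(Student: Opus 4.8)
The plan is to read off the shape of the left action from the three specializations of axiom (\ref{bluered}) whose second argument is $h$ --- these are the identities (\ref{[h,[m,h]]}), (\ref{[f,[m,h]]}) and (\ref{[e,[m,h]]}) displayed above. The mechanism is simple: evaluated on a weight vector of the right action, each of these identities forces the corresponding left-action image to be again a weight vector of the right action, with the weight shifted in a controlled way (by $0$, $-2$, $+2$ for $h$, $f$, $e$ respectively). Since $M=V\oplus W$ has at-most-two-dimensional weight spaces for the right $\mathfrak{sl}_2$-action, identifying the relevant weight space explicitly yields (\ref{at_most_2_coefficients}).

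First I would record the weight decomposition of $M$ under the operator $R_h:=\rl -,h\rr$. By Theorem \ref{irreducible}, $R_h$ acts on $V$ with simple spectrum $\{\,n-2i : 0\le i\le n\,\}$, with $v_i$ the eigenvector for $n-2i$, and on $W$ with simple spectrum $\{\,m-2j : 0\le j\le m\,\}$, with $w_j$ the eigenvector for $m-2j$. Hence for $k\in\mathbb Z$ the $k$-eigenspace $M_k$ of $R_h$ is at most two-dimensional; it is two-dimensional precisely when $k$ occurs in both spectra, which (recall $n\ge m$) forces $k\equiv n\equiv m\pmod 2$ --- that is, $\ell\in\mathbb Z$ --- and $-m\le k\le m$. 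When $\ell\notin\mathbb Z$ the two spectra are disjoint, so every $M_k$ lies entirely in $V$ or entirely in $W$.

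Next I would feed the basis vectors into the three identities. Applying (\ref{[h,[m,h]]}) with $v=v_i$ and using $\rl v_i,h\rr=(n-2i)v_i$ and $[h,h]=0$, it collapses to $(n-2i)\bl h,v_i\br=R_h\bigl(\bl h,v_i\br\bigr)$, so $\bl h,v_i\br\in M_{n-2i}$, which is spanned by $v_i$ and (only when $\ell\in\mathbb Z$ and $0\le i-\ell\le m$) by $w_{i-\ell}$; this is the first line of (\ref{at_most_2_coefficients}), the notation there simply renaming the unique surviving index in each sum. With $v=w_j$ instead one gets $\bl h,w_j\br\in M_{m-2j}=\spa\{v_{j+\ell},w_j\}$. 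For the four remaining rows the commutator term in (\ref{bluered}) is non-trivial: (\ref{[f,[m,h]]}) with $v=v_i$ yields $R_h\bigl(\bl f,v_i\br\bigr)=(n-2i-2)\bl f,v_i\br$, hence $\bl f,v_i\br\in M_{n-2i-2}=\spa\{v_{i+1},w_{i+1-\ell}\}$; (\ref{[e,[m,h]]}) with $v=v_i$ yields $R_h\bigl(\bl e,v_i\br\bigr)=(n-2i+2)\bl e,v_i\br$, hence $\bl e,v_i\br\in M_{n-2i+2}=\spa\{v_{i-1},w_{i-1-\ell}\}$; and the same two identities evaluated on $w_j$ put $\bl f,w_j\br$ in $M_{m-2j-2}=\spa\{v_{j+1+\ell},w_{j+1}\}$ and $\bl e,w_j\br$ in $M_{m-2j+2}=\spa\{v_{j-1+\ell},w_{j-1}\}$. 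Throughout, an out-of-range basis vector is $0$ by the standing convention, and when $\ell\notin\mathbb Z$ the cross terms between $V$ and $W$ drop out by the parity observation of the previous paragraph. Collecting the six inclusions gives (\ref{at_most_2_coefficients}).

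I do not expect a genuine obstacle here: everything reduces to a weight computation, and the only point demanding care is the bookkeeping --- matching each forced $R_h$-eigenvalue to the correct indices in the $v$- and $w$-bases and keeping the zero-convention consistent. The substantive relations cutting the surviving coefficients down further will come not from these three identities but from the $y=f$ and $y=e$ specializations of (\ref{bluered}) together with (\ref{blue}), which are analysed in the following sections.
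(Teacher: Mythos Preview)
Your argument is correct and follows essentially the same approach as the paper: both use precisely the three identities (\ref{[h,[m,h]]}), (\ref{[f,[m,h]]}), (\ref{[e,[m,h]]}) evaluated on the basis vectors $v_i$ and $w_j$ to pin down the $R_h$-weight of each left-action image. The paper carries this out by expanding in coordinates and comparing coefficients term by term, whereas you phrase it more conceptually via eigenspaces of $R_h$, but the content is identical.
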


\begin{proof} Substituting $v=v_i$ in identities (\ref{[h,[m,h]]}), (\ref{[f,[m,h]]}) and (\ref{[e,[m,h]]}) yields the following:
 \begin{align*}
 (n-2i)(\sum\limits_{j=0}^{n} \eta_{ij}^{11} v_j +\sum\limits_{j=0}^{m} \eta_{ij}^{12} w_j) &= \sum\limits_{j=0}^{n} \eta_{ij}^{11} (n-2j) v_j +\sum\limits_{j=0}^{m} \eta_{ij}^{12}(m-2j) w_j;\\
  (n-2i)(\sum\limits_{j=0}^{n} \phi_{ij}^{11} v_j +\sum\limits_{j=0}^{m} \phi_{ij}^{12} w_j) &= \sum\limits_{j=0}^{n} \phi_{ij}^{11} (n-2j+2) v_j +\sum\limits_{j=0}^{m} \phi_{ij}^{12}(m-2j+2) w_j;\\
      (n-2i)(\sum\limits_{j=0}^{n} \epsilon_{ij}^{11} v_j +\sum\limits_{j=0}^{m} \epsilon_{ij}^{12} w_j)&=\sum\limits_{j=0}^{n} (n-2j-2)\epsilon_{ij}^{11} v_j +\sum\limits_{j=0}^{m}(m-2j-2) \epsilon_{ij}^{12} w_j.
 \end{align*}
 
We get $\eta_{ij}^{11}=0$ unless $j=i$ and $\eta_{ij}^{12}=0$ unless $j=i-\ell$; 

\hspace*{1.09cm} $\phi_{ij}^{11}=0$ unless $j=i+1$ and $\phi_{ij}^{12}=0$ unless $j=i+1-\ell$;

\hspace*{1.15cm} $\epsilon_{ij}^{11}=0$ unless $j=i-1$ and $\epsilon_{ij}^{12}=0$ unless $j=i-1-\ell$.

Denote by $\eta^{11}_i:=\eta_{i, i}^{11}$, $\eta^{12}_{i-\ell}:=\eta_{i, i-\ell}^{12}$, $\phi^{11}_{i+1}:=\phi_{i, i+1}^{11}$, $\phi^{12}_{i+1-\ell}:=\phi_{i, i+1-\ell}^{12}$, $\epsilon^{11}_{i-1}:=\epsilon_{i, i-1}^{11}$, $\epsilon^{12}_{i-1-\ell}:=\epsilon_{i, i-1-\ell}^{12}$ to obtain the first column of (\ref{at_most_2_coefficients}). By the symmetry, interchanging $v$'s and $w$'s one derives the same results, except $\ell$ is changed to $-\ell$, which is the second column of (\ref{at_most_2_coefficients}). 
\end{proof}

\begin{proposition}\label{proposition_f}
	For $0 \leq i \leq n-1 $ and $0\leq j \leq m-1$ we have the following 
	$$\begin{array}{lll}
\, \text{for } \ell=0\colon &            \vline  \hspace*{0.5cm}   \text{for }  \ell=1\colon&              \vline  \hspace*{0.5cm}    \text{for }   \ell\geq 2\colon        \\
\, \bl f,v_{i}\br =\phi^{11}v_{i+1}+\phi^{12}w_{i+1}   &   \vline   \hspace*{0.5cm}   \bl f,v_i\br =\phi^{11} v_{i+1}+\phi^{12} w_i &     \vline  \hspace*{0.5cm}     \bl f,v_i\br =\phi^{11} v_{i+1}                      \\
\, \bl f,w_{i}\br =\phi^{21}v_{i+1}+\phi^{22}w_{i+1}   &    \vline   \hspace*{0.5cm}  \bl f,w_j\br =\phi^{21}v_{j+2}+\phi^{22} w_{j+1}               &    \vline   \hspace*{0.5cm}     \bl f,w_j\br =\phi^{22} w_{j+1}\\
  &    \vline   \hspace*{0.5cm}  \bl f,w_m\br =\phi^{21}v_{n}              &    \vline   \hspace*{0.5cm}     
  \end{array}.	$$
\end{proposition}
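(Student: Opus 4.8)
The plan is to run the identities of Proposition~\ref{proposition1} through the four $f$-related relations from identity~(\ref{bluered}), namely (\ref{[f,[m,h]]}), (\ref{[f,[m,f]]}), (\ref{[f,[m,e]]}) and (\ref{[h,[m,f]]}), specialized to $v=v_i$ and $v=w_j$. By Proposition~\ref{proposition1} we already know that $\bl f,v_i\br=\phi_{i+1}^{11}v_{i+1}+\phi_{i+1-\ell}^{12}w_{i+1-\ell}$ and $\bl f,w_j\br=\phi_{j+1+\ell}^{21}v_{j+1+\ell}+\phi_{j+1}^{22}w_{j+1}$, so the task is to show that the coefficients $\phi_{k}^{11}$, $\phi_k^{22}$ do not actually depend on $k$ (hence the notation $\phi^{11}$, $\phi^{22}$), that $\phi_k^{12}$ and $\phi_k^{21}$ likewise collapse to single scalars $\phi^{12}$, $\phi^{21}$, and then to read off which of these survive in each range of $\ell$. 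First I would substitute $v=v_i$ into (\ref{[f,[m,f]]}): the left side is $\bl f,\rl v_i,f\rr\br=\bl f,v_{i+1}\br=\phi_{i+2}^{11}v_{i+2}+\phi_{i+2-\ell}^{12}w_{i+2-\ell}$, and the right side is $\rl\bl f,v_i\br,f\rr=\phi_{i+1}^{11}v_{i+2}+\phi_{i+1-\ell}^{12}w_{i+2-\ell}$; comparing coefficients of $v_{i+2}$ gives $\phi_{i+2}^{11}=\phi_{i+1}^{11}$ for all admissible $i$, i.e.\ a single constant $\phi^{11}$, and similarly comparing coefficients of $w_{i+2-\ell}$ gives that $\phi^{12}_k$ is constant. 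The same computation with $v=w_j$ yields that $\phi^{22}_k$ and $\phi^{21}_k$ are constant.

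The next step is to pin down the interplay with $\ell$, which is where the case split by $\ell=0$, $\ell=1$, $\ell\geq 2$ comes from: the index shifts in Proposition~\ref{proposition1} must land inside the valid ranges $0,\dots,n$ for $v$'s and $0,\dots,m$ for $w$'s, and the convention is that out-of-range $v_i$, $w_j$ vanish. For $\bl f,v_i\br=\phi^{11}v_{i+1}+\phi^{12}w_{i+1-\ell}$ with $0\le i\le n-1$, the $w$-term is present precisely when $0\le i+1-\ell\le m$; when $\ell=0$ this is automatic for $i\le m-1$ and agrees with the displayed formula, when $\ell=1$ the term is $w_i$, and when $\ell\ge 2$ one must check the $w$-component actually vanishes identically on the relevant range — this is the subtle point. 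For $\bl f,w_j\br=\phi^{21}v_{j+1+\ell}+\phi^{22}w_{j+1}$ with $0\le j\le m-1$, the $v$-index $j+1+\ell$ ranges up to $m+\ell=n$ when $j=m-1$, so the $v$-term is always in range; the extra line $\bl f,w_m\br=\phi^{21}v_n$ in the $\ell\ge 1$ columns comes from applying the same formula at $j=m$, where the $w_{m+1}$ term drops out. I would obtain the vanishing of $\phi^{12}$ (for $\ell\ge 2$) and the precise surviving combinations by feeding the constants back into the mixed identities (\ref{[f,[m,e]]}) and (\ref{[h,[m,f]]}) together with the forms from Proposition~\ref{proposition1} for $\bl h,-\br$ and $\bl e,-\br$, and tracking which index-shifted basis vectors are genuinely nonzero.

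The main obstacle I expect is bookkeeping the boundary cases cleanly: the identities hold for all $i$, but the ``interesting'' equations are those where neither side is forced to be zero by the $v_i=w_j=0$ convention, and one has to be careful that a coefficient relation like $\phi^{11}_{i+2}=\phi^{11}_{i+1}$ is only derived on the overlap of valid ranges, then argue the chain of equalities connects all indices. In particular for $\ell\geq 2$ one has to show not merely that $\phi^{12}_k$ is constant but that this constant is $0$; I expect this to follow from evaluating one of the mixed relations at an index where the $v$-side and $w$-side index shifts are incompatible, forcing the lone $w$-coefficient to vanish. Once the constancy and the vanishing are established, assembling the table is a direct substitution of the allowed index ranges into the formulas of Proposition~\ref{proposition1}, treating $\ell=0$, $\ell=1$, $\ell\geq 2$ separately as displayed.
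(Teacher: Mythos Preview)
Your constancy argument via identity (\ref{[f,[m,f]]}) matches the paper exactly. The divergence is in how you propose to force $\phi^{12}=0$ and $\phi^{21}=0$ when $\ell\geq 2$. You plan to invoke the mixed identities (\ref{[f,[m,e]]}) and (\ref{[h,[m,f]]}), which drag in the still-undetermined $\eta$ and $\epsilon$ coefficients from Proposition~\ref{proposition1}; this can be made to work (for instance (\ref{[h,[m,f]]}) yields the recursion $\eta^{12}_{k}=-2(k+1)\phi^{12}$, and feeding that into (\ref{[f,[m,e]]}) produces a numerical identity that fails for $\ell\geq 2$ unless $\phi^{12}=0$), but it is a detour. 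The paper stays with (\ref{[f,[m,f]]}) alone: evaluating it at $v=v_{\ell-2}$, the right side $\rl\bl f,v_{\ell-2}\br,f\rr=\phi^{11}v_{\ell}$ carries no $w$-term (since $w_{-1}=0$), while the left side $\bl f,v_{\ell-1}\br=\phi^{11}v_{\ell}+\phi^{12}w_{0}$ does, forcing $\phi^{12}=0$; similarly $v=w_m$ gives $\rl\bl f,w_m\br,f\rr=\phi^{21}v_{m+\ell+2}$ with $m+\ell+2\leq n$ for $\ell\geq 2$, while the left side vanishes, forcing $\phi^{21}=0$. This is precisely your ``incompatible index shift'' mechanism, just applied to (\ref{[f,[m,f]]}) itself rather than to the mixed relations --- so (\ref{[f,[m,h]]}), (\ref{[f,[m,e]]}), (\ref{[h,[m,f]]}) are not needed in this proposition at all. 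Note also that you only mention the vanishing of $\phi^{12}$ for $\ell\geq 2$; the statement requires $\phi^{21}=0$ there as well (the displayed $\bl f,w_j\br=\phi^{22}w_{j+1}$), and the argument is the symmetric boundary evaluation just described.
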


\begin{proof} We consider several cases. 
	
	\textit{\textbf{Case 1.}} Let $\ell=0$. By Proposition \ref{proposition1} we have
	$$\begin{array}{lll}
	\bl f,v_i\br =\phi^{11}_{i+1}v_{i+1}+\phi^{12}_{i+1}w_{i+1}, & \bl f,w_i\br =\phi^{21}_{i+1}v_{i+1}+\phi^{22}_{i+1}w_{i+1}, & 0\leq i\leq n-1.
	\end{array}$$
	For all $0\leq i\leq n-1$ the identity (\ref{[f,[m,f]]}) with $v=v_i$ implies
	$$\phi^{11}_{i+2}v_{i+2}+\phi^{12}_{i+2}w_{i+2}=\phi^{11}_{i+1} v_{i+2}+\phi^{12}_{i+1}w_{i+2}.$$
	From this we have
	$\phi^{11}_{i+1}=\phi^{11}_{i+2}$ and $\phi^{12}_{i+1}=\phi^{12}_{i+2}$ for all $0\leq i\leq n-2.$ Set $\phi^{11}:=\phi^{11}_1$ and $\phi^{12}:=\phi^{12}_1$ to get $\bl f,v_i\br =\phi^{11} v_{i+1}+\phi^{12} w_{i+1}$ for all $0\leq i\leq n-1$. Analogously, $\bl f,w_i\br =\phi^{21} v_{i+1}+\phi^{22} w_{i+1}, \,\, 0\leq i\leq n-1.$
	
\textit{\textbf{Case 2.}} Let  $\ell=1$. By Proposition \ref{proposition1} there are the following brackets:
	\[
	\begin{array}{lll}
	\bl f,v_i\br =\phi^{11}_{i+1}v_{i+1}+\phi^{12}_i w_i, \ \ 0\leq i \leq m=n-2, \\ [1mm]
	\bl f,v_{n-1}\br =\phi^{11}_{n}v_{n},  \\ [1mm]
	\bl f,w_i\br =\phi^{21}_{i+2}v_{i+2}+\phi^{22}_{i+1}w_{i+1}, \ \ 0\leq i \leq m-1.
	\end{array}
	\]

As in Case 1 we obtain 
\begin{center}
		$\phi^{11}_1=\phi^{11}_2=\dots=\phi^{11}_{m+2}:=\phi^{11}, \quad \phi^{12}_0=\phi^{12}_1=\dots=\phi^{12}_m:=\phi^{12}$,
		
	$\phi^{21}_2=\phi^{21}_3=\dots=\phi^{21}_{m+1}:=\phi^{21}, \quad \phi^{22}_1=\phi^{22}_2=\dots=\phi^{22}_m:=\phi^{22}$.
\end{center}

\textit{\textbf{Case 3.}} Let $\ell \geq 2$. By Proposition \ref{proposition1} there are the following brackets:
	$$\begin{array}{lll}
	
	\bl f,v_{i-1} \br =\phi_{i}^{11}v_{i} , \,\, 0\leq i\leq \ell-1, & \\
	[2mm]
	
	\bl f,v_{i-1}\br =\phi_{i}^{11}v_{i} +\phi_{-\ell+i}^{12}w_{-\ell+i}, \,\, \ell\leq i\leq m+\ell, & \\ [2mm]
	
	\bl f,v_{i-1} \br =\phi_{i}^{11}v_{i} , \,\, m+\ell+1\leq i\leq n, & \\ [2mm]
	
	\bl f,w_{j-1}\br =\phi_{\ell+j}^{21}v_{\ell+j} +\eta_{j}^{22}w_{j}, \,\, 0\leq j\leq m. & \\ 
	\end{array}$$
	
As in Case 1 we obtain 	
\begin{center}
$\phi^{11}_1=\phi^{11}_1=\dots=\phi^{11}_{n}=\colon\phi^{11}, \quad \phi^{12}_0=\phi^{12}_1=\dots=\phi^{12}_m=\colon\phi^{12},$

$\phi^{21}_{\ell+1}=\phi^{21}_{\ell+2}=\dots=\phi^{21}_{m+\ell}=\colon\phi^{21}, \quad \phi^{22}_1=\phi^{22}_2=\dots=\phi^{22}_m=\colon\phi^{22}.$	
\end{center}	

Using the identity (\ref{[f,[m,f]]}) for $v=w_{\ell-2}$ and $v=w_{m-1}$ yields $ \phi^{12}=0$ and $ \phi^{21}=0$. This completes the proof.
\end{proof}

\begin{proposition}\label{proposition_e}
For $0 \leq i \leq n $ and $0\leq j \leq m$ we have the following
	$$\begin{array}{lll}
\text{for }	\ell=0\colon& \bl e,v_i\br =\displaystyle\frac{i(n-i+1)}{n}(\epsilon^{11}v_{i-1}+\epsilon^{12}w_{i-1}), \\ [1mm]
	& \bl e,w_i\br =\displaystyle\frac{i(n-i+1)}{n}(\epsilon^{21}v_{i-1}+\epsilon^{22}w_{i-1});  \\ [2mm]
	\text{for }	\ell= 1\colon & \bl e,v_i\br = i\left( \displaystyle\frac{(n-i+1)}{n}\epsilon^{11}v_{i-1}+ \frac{i+1}{2}\epsilon^{12}w_{i-2} \right), \\ [1mm]
	& \bl e,w_j\br =(m+1-j)\left(\displaystyle\frac{m+2-j}{(m+2)(m+1)}\epsilon^{21}v_j+\frac{j}{m}\epsilon^{22}w_{j-1}\right); \\ [1mm]
\text{for }	\ell\geq 2\colon & \bl e,v_i\br =\displaystyle\frac{i(n-i+1)}{n}\epsilon^{11}v_{i-1}, \\ [1mm]
	& \bl e,w_j\br =\displaystyle\frac{j(m-j+1)}{m}\epsilon^{22}w_{j-1}. \\ [1mm]
	\end{array}$$
\end{proposition}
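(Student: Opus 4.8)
The plan is to mine the identities (\ref{[h,[m,e]]}), (\ref{[e,[m,e]]}) and (\ref{[f,[m,e]]}) in exactly the same spirit as Propositions \ref{proposition1} and \ref{proposition_f}: substitute the basis vectors $v=v_i$ and $v=w_j$, expand both sides using Proposition \ref{proposition1} together with the explicit right action, equate coefficients, and solve the resulting first–order recursions by telescoping. In fact identity (\ref{[e,[m,e]]}), $\bl e,\rl v,e\rr\br=\rl\bl e,v\br,e\rr$, turns out to carry all the information one needs.

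First I would treat the diagonal coefficients. Inserting $v=v_i$ into (\ref{[e,[m,e]]}) and comparing $v$–components yields, for $2\le i\le n$, the relation $i(n-i+1)\,\epsilon^{11}_{i-2}=(i-1)(n-i+2)\,\epsilon^{11}_{i-1}$; the factor $(i-1)(n-i+2)$ never vanishes in this range, so the recursion telescopes, and with $\epsilon^{11}:=\epsilon^{11}_0$ one gets $\epsilon^{11}_{i-1}=\frac{i(n-i+1)}{n}\epsilon^{11}$. The analogous computation with $v=w_j$, reading off $w$–components, gives $\epsilon^{22}_{j-1}=\frac{j(m-j+1)}{m}\epsilon^{22}$. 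Both formulas hold for every value of $\ell$, and they account for the $\epsilon^{11}$– and $\epsilon^{22}$–parts of all three displayed cases.

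For the cross coefficients $\epsilon^{12},\epsilon^{21}$ the value of $\ell$ enters. Comparing the $w$–components of (\ref{[e,[m,e]]}) at $v=v_i$ gives the homogeneous recursion $i(n-i+1)\,\epsilon^{12}_{i-2-\ell}=(i-1-\ell)(m-i+\ell+2)\,\epsilon^{12}_{i-1-\ell}$, and everything hinges on whether its index range reaches an endpoint where the relevant $w$ vanishes. When $\ell\ge 2$ the index $i=m+\ell+2$ still lies in $\{0,\dots,n\}$ (this is where $\ell\ge 2$ is used), and there $\rl\bl e,v_{m+\ell+2}\br,e\rr$ has no $w$–component, because the only $w$ that could occur in $\bl e,v_{m+\ell+2}\br$ is $w_{m+1}=0$; equating $w_m$–components then gives $-(m+\ell+2)(\ell-1)\epsilon^{12}_m=0$, so $\epsilon^{12}_m=0$, and the recursion propagates this downward to $\epsilon^{12}\equiv 0$. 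Symmetrically, the $v$–component of (\ref{[e,[m,e]]}) at $v=w_j$, evaluated at the bottom ($[w_0,e]=0$), forces $\epsilon^{21}\equiv 0$ when $\ell\ge 2$. When $\ell=1$ the common factor $n-i+1=m-i+3$ cancels, the $\epsilon^{12}$–recursion reduces to $\epsilon^{12}_{i-2}=\frac{i}{i-2}\epsilon^{12}_{i-3}$ and telescopes, while the $v$–component at $v=w_j$ gives $(m-j+1)\epsilon^{21}_{j-1}=(n-j+1)\epsilon^{21}_j$, whose telescoped solution is the displayed rational coefficient; when $\ell=0$ one has $n=m$, the two cross recursions have the same shape as the diagonal ones, and telescope to $\frac{i(n-i+1)}{n}\epsilon^{12}$ and $\frac{i(n-i+1)}{n}\epsilon^{21}$. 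Finally one records the degenerate cases $n=0$ or $m=0$, where all the claimed expressions collapse to $0$. (As an alternative route for $\ell\ge 2$, one may instead feed $\phi^{12}=\phi^{21}=0$ from Proposition \ref{proposition_f} into (\ref{[f,[m,e]]}) to kill the relevant $\eta$'s first, and then use (\ref{[h,[m,e]]}) to kill the $\epsilon$'s.)

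I expect the only real difficulty to be organizational: tracking the index shifts consistently across $\ell=0,1,\ge 2$, and recognizing precisely when the homogeneous cross–term recursion inherits a boundary zero — which is exactly the mechanism producing the dichotomy between $\ell\ge 2$ (cross terms vanish) and $\ell\le 1$ (cross terms survive) — together with the slightly asymmetric shifts that occur when $\ell=1$. The individual coefficient comparisons and the telescoping products themselves are routine, of the same nature already carried out in Propositions \ref{proposition1} and \ref{proposition_f}.
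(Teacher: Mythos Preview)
Your proposal is correct and follows essentially the same approach as the paper: both arguments rely solely on identity (\ref{[e,[m,e]]}), substitute $v=v_i$ and $v=w_j$, read off the two-term recursions for $\epsilon^{11},\epsilon^{12},\epsilon^{21},\epsilon^{22}$, and telescope, with the $\ell\ge 2$ cross terms killed by the boundary evaluations you identify (the paper uses $v=v_{m+\ell+2}$ and $v=w_1$ for the same purpose). The only difference is organizational --- you handle the diagonal coefficients uniformly in $\ell$ before splitting cases, whereas the paper runs the three cases $\ell=0,1,\ge 2$ in parallel --- but the computations are identical.
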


\begin{proof} As in the proof of Proposition \ref{proposition_f}, we consider several cases. 
	
\textit{\textbf{Case 1.}} Let $\ell=0$. By Proposition \ref{proposition1} for $1\leq i\leq n$ we have 
	$$\begin{array}{ll}
		\bl e,v_i\br =\epsilon^{11}_{i-1} v_{i-1}+\epsilon^{12}_{i-1}w_{i-1}, & \bl e,w_i\br =\epsilon^{21}_{i-1}v_{i-1}+\epsilon^{22}_{i-1} w_{i-1}.
	\end{array}$$
	
	Consider the identity (\ref{[e,[m,e]]}) for $v=v_i$ and $1\leq i\leq n$ to get 	$$i(n-i+1)(\epsilon^{11}_{i-2}v_{i-2}+\epsilon^{12}_{i-2}w_{i-2})=(i-1)(n-i+2)(\epsilon^{11}_{i-1} v_{i-2}+\epsilon^{12}_{i-1}w_{i-2}).$$
	From this we obtain for $2\leq i\leq n$ the equalities
	$$\epsilon^{11}_{i-1}(i-1)(n-i+2)=\epsilon^{11}_{i-2}i(n-i+1) \text{ and } \epsilon^{12}_{i-1}(i-1)(n-i+2)=\epsilon^{12}_{i-2}i(n-i+1).$$
Denote by $\epsilon^{11}\colon=\epsilon^{11}_0$ and $\epsilon^{12}\colon=\epsilon^{12}_0$. Then for $1\leq i \leq n-1$ we have  
	$$\epsilon^{11}_i=\frac{(i+1)(n-i)}{n}\epsilon^{11}, \quad \epsilon^{12}_i=\frac{(i+1)(n-i)}{n}\epsilon^{12}.$$
	
\noindent Similarly, for $1\leq i \leq n-1$ one gets 
	$\displaystyle \epsilon^{21}_i=\frac{(i+1)(n-i)}{n}\epsilon^{21}$ and $\displaystyle \epsilon^{22}_i=\frac{(i+1)(n-i)}{n}\epsilon^{22}$. 
	
\textit{\textbf{Case 2.}} Let $\ell=1$. By Proposition \ref{proposition1} there are the following brackets:
	\[
	\begin{array}{lll}
	\bl e,v_1\br =\epsilon^{11}_0 v_0, \\ [1mm]
	\bl e,v_i\br =\epsilon^{11}_{i-1}v_{i-1}+\epsilon^{12}_{i-2}w_{i-2}, \ \ 2\leq i \leq m+2=n, \\ [1mm]
		\bl e,w_0\br=\epsilon^{21}_0v_0,\\[1mm]
	\bl e,w_i\br =\epsilon^{21}_iv_i+\epsilon^{22}_{i-1}w_{i-1}, \ \ 1\leq i \leq m.
	\end{array}
	\]
	As in the Case 1 we have 
	\[   \epsilon^{11}_i=\frac{(i+1)(n-i)}{n}\epsilon^{11}, \quad 1\leq i\leq n-1 ; \quad  \epsilon^{12}_i=\frac{(i+1)(i+2)}{2}\epsilon^{12}, \quad 1\leq i\leq m ; \]
	$$\epsilon_j^{21}= \frac{(m+1-j)(m+2-j)}{(m+2)(m+1)}\epsilon_0^{21},\  \epsilon^{22}_{j-1}= \frac{j(m+1-j)}{m}\epsilon^{22}, \ \text{for } 1\leq j \leq m.$$
	
\textit{\textbf{Case 3.}} Let $\ell \geq 2$. By Proposition \ref{proposition1} there are the following brackets:
	$$\begin{array}{lll}
	\bl e,v_{i+1} \br =\epsilon_{i}^{11}v_{i} , \,\, 0\leq i\leq \ell-1, \\ [1mm]
	\bl e,v_{i+1} \br =\epsilon_{i}^{11}v_{i} +\epsilon_{-\ell+i}^{12}w_{-\ell+i}, \,\, \ell\leq i\leq m+\ell, \\ [1mm]
	\bl e,v_{i+1} \br =\epsilon_{i}^{11}v_{i} , \,\, m+\ell+1\leq i\leq n-1, \\ [1mm]
	\bl e,w_{j+1}\br =\epsilon_{\ell+j}^{21}v_{\ell+j} +\epsilon_{j}^{22}w_{j}, \,\, 0\leq j\leq m. \\
	\end{array}$$

As in the previous cases using  the identity (\ref{[e,[m,e]]}) for $v=v_i, 1\leq i\leq n$ one obtains $\epsilon_i^{11}=\displaystyle \frac{(i+1)(n-i)}{n}\epsilon_0^{11}$ and $\epsilon_{i+1-\ell}^{12}=\displaystyle \frac{\binom{m}{i-\ell-1}}{\binom{i+2}{\ell+1}\binom{m+\ell-3}{i-\ell-1}}\cdot \frac{1}{(m+\ell-2)(m+\ell-1)}   \epsilon_{0}^{12}$.

The identity (\ref{[e,[m,e]]}) for $v=v_{\ell+m+2}$ yields $\epsilon^{12}_m=0$, which implies $\epsilon^{12}_i=0$ for all $0\leq i \leq m$.

Similarly, the identity (\ref{[e,[m,e]]}) for $v=w_1$ derives $\epsilon^{21}_\ell=0$.  As in the previous cases, 
	$\epsilon^{21}_\ell=\epsilon^{21}_{\ell+1}=\dots=\epsilon^{21}_{\ell+m-1}=0$ and $\displaystyle \epsilon^{22}_i=\frac{(i-1)(m-i)}{m}\epsilon^{22}_0$ for all $1\leq i\leq m$. Denoting by $\epsilon^{22}:=\epsilon^{22}_0$ completes the proof. 
	
Note that, in cases when $m=0$ all multiplications vanish since there is no basis vector with index negative index. 	
\end{proof}

\begin{proposition} \label{proposition_h} We have 
	$$\begin{array}{llll}
	\text{for } \ell=0: & \bl h,v_i\br =(\eta^{11}-2i\phi^{11})v_i+(\eta^{12}-2i\phi^{12})w_i, \,\, 0\leq i\leq n, \\ [1mm]
	& \bl h,w_i\br =(\eta^{21}-2i\phi^{21})v_i+(\eta^{22}-2i\phi^{22})w_i, \,\, 0\leq i\leq n; \\ [1mm]
\text{for }	\ell=1: & \bl h,v_i\br =(\eta^{11}-2i\phi^{11})v_i-2i\phi^{12} w_{i-1}, \ \ 0\leq i \leq n, \\ [1mm]
	& \bl h,w_i\br =2(m+1-i)\phi^{21} v_{i+1}+(\eta^{22}-2i\phi^{22})w_i, \quad 0\leq i \leq m; \\ [1mm]
\text{for }	\ell\geq 2: & \bl h,v_i\br =(\eta^{11}-2i\phi^{11})v_i, \ \ 0 \leq i \leq n, \\ [1mm]
	& \bl h,w_i\br =(\eta^{22}-2i\phi^{21})w_i, \ \ 0 \leq i \leq m.
	\end{array}
	$$
\end{proposition}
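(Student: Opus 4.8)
The plan is to read off recursions for the coefficients $\eta^{\alpha\beta}_\bullet$ from identity (\ref{[h,[m,f]]}), feeding in the shapes of $\bl h,-\br$ already obtained in Proposition \ref{proposition1} and of $\bl f,-\br$ from Proposition \ref{proposition_f}; no information about the $\epsilon$'s is needed. Substituting $v=v_i$ into (\ref{[h,[m,f]]}) and using $\rl v_i,f\rr=v_{i+1}$, the left-hand side becomes $\bl h,v_{i+1}\br$ and the right-hand side becomes $\rl\bl h,v_i\br,f\rr-2\bl f,v_i\br$; plugging in the known expressions and comparing coefficients of the linearly independent basis vectors on both sides gives, for each admissible $i$, a recursion $\eta^{11}_{i+1}=\eta^{11}_i-2\phi^{11}$ together with a companion recursion for the off-diagonal coefficient (with $\phi^{12}$ in place of $\phi^{11}$). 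Substituting $v=w_j$ and using $\rl w_j,f\rr=w_{j+1}$ produces in the same way the recursions governing $\eta^{21}_\bullet$ and $\eta^{22}_\bullet$.

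For $\ell=0$ all four recursions hold on the full range $0\le i\le n-1$ and read $\eta^{\alpha\beta}_{i+1}=\eta^{\alpha\beta}_i-2\phi^{\alpha\beta}$, so telescoping from $i=0$ and abbreviating $\eta^{\alpha\beta}:=\eta^{\alpha\beta}_0$ gives the $\ell=0$ formulas at once. For $\ell=1$ and $\ell\ge2$ the diagonal recursions for $\eta^{11}_\bullet$ and $\eta^{22}_\bullet$ are solved the same way, producing the $\eta^{11}-2i\phi^{11}$ and $\eta^{22}-2i\phi^{22}$ summands.

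The step that needs care --- and the only genuine obstacle --- is the off-diagonal coefficients when $\ell\ge1$: their recursions determine them only up to an additive constant, which is pinned down by applying (\ref{[h,[m,f]]}) at the extremal indices and invoking the conventions $v_i=0$, $w_j=0$ for out-of-range indices. Thus taking $v=v_0$, where the $w$-term of $\bl h,v_0\br$ is simply absent, forces the initial value $\eta^{12}_0=-2\phi^{12}$ of the off-diagonal recursion, and telescoping yields the coefficient $-2i\phi^{12}$ of $w_{i-1}$ in $\bl h,v_i\br$; taking $v=w_m$, so that $\rl w_m,f\rr=w_{m+1}=0$ and (since $\bl f,w_m\br$ has no $w$-component for $\ell\ge1$) only the $v$-parts survive, anchors the $\eta^{21}$-recursion, giving $\eta^{21}_{m+\ell}=2\phi^{21}$ for $\ell=1$ and $\eta^{21}_{m+\ell}=0$ for $\ell\ge2$; telescoping then delivers the entries $2(m+1-i)\phi^{21}$ for $\ell=1$ and $0$ for $\ell\ge2$, so that $\bl h,w_i\br$ has no $v$-component in the latter case. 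The special brackets $\bl f,v_{n-1}\br$ and $\bl f,w_m\br$ singled out in Proposition \ref{proposition_f} for $\ell=1$ enter at the top of the range as consistency checks on the diagonal recursions. Reassembling the solved coefficients into $\bl h,v_i\br$ and $\bl h,w_j\br$ through Proposition \ref{proposition1} yields the three tables.
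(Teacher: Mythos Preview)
Your approach is the paper's: both use only identity (\ref{[h,[m,f]]}) with $v=v_i$ and $v=w_j$, read off the recursions $\eta^{\alpha\beta}_{k+1}=\eta^{\alpha\beta}_k-2\phi^{\alpha\beta}$, and anchor the off-diagonal constants at boundary indices. One small correction: for $\ell\geq 2$ the substitution $v=v_0$ yields no information about $\eta^{12}$, since neither $\bl h,v_0\br$ nor $\bl h,v_1\br$ carries a $w$-component there; the correct extremal index is $v=v_{\ell-1}$, where $\bl h,v_{\ell-1}\br$ has no $w$-term but $\bl h,v_\ell\br=\eta^{11}_\ell v_\ell+\eta^{12}_0 w_0$ does, forcing $\eta^{12}_0=0$ (consistent with $\phi^{12}=0$ from Proposition~\ref{proposition_f}), after which the constant recursion gives $\eta^{12}_j=0$ for all $j$. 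With that adjustment your argument is complete and coincides with the paper's.
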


\begin{proof} Consider the following cases.
	
\textit{\textbf{Case 1.}} Let  $\ell=0$. Using Proposition \ref{proposition1} we have the following brackets:
	$$\begin{array}{lll}
	\bl h,v_i\br =\eta^{11}_iv_i+\eta_{i}^{12}w_{i},  & \bl h,w_i\br =\eta^{21}_{i}v_{i}+\eta^{22}_i w_i, &  0\leq i\leq n.\\ [1mm]
	\end{array}
	$$
	Identity (\ref{[h,[m,f]]}) for $v=v_i, \ 0\leq i\leq n-1$  yields
	$$\eta^{11}_{i+1}v_{i+1}+\eta_{i+1}^{12}w_{i+1}=(\eta^{11}_i-2\phi^{11})v_{i+1}+(\eta_i^{12}-2\phi^{12})w_{i+1}.$$
	From this we have $\eta^{11}_{i+1}=\eta^{11}_{i}-2\phi^{11}$ and $ \eta_{i+1}^{12}=\eta_i^{12}-2\phi^{12}$ for all $0\leq i\leq n-1.$ Denoting by $\eta^{11}:=\eta^{11}_0$ and $\eta^{12}\colon=\eta_0^{12}$ derives 
	$$\bl h,v_i\br =(\eta^{11}-2i\phi^{11})v_i+(\eta^{12}-2i\phi^{12})w_i, \  0\leq i\leq n.$$
\noindent	Similarly, $\bl h,w_i\br =(\eta^{21}-2i\phi^{21})v_i+(\eta^{22}-2i\phi^{22})w_i, \,\, 0\leq i\leq n$.
	
\textit{\textbf{Case 2.}}	For $\ell=1$ by Proposition \ref{proposition1} there are the following brackets:
	\[
	\begin{array}{lll}
	\bl h,v_0\br =\eta^{11}_0 v_0, &  0\leq i \leq m, \\ [1mm]
	\bl h,v_i\br =\eta^{11}_iv_i+\eta_{i-1}^{12}w_{i-1}, & 1\leq i\leq m+1=n-1,  \\ [1mm]
	\bl h,v_n\br =\eta^{11}_n v_n,  \\ [1mm]
	\bl h,w_i\br =\eta^{21}_{i+1}v_{i+1}+\eta^{22}_iw_i, & 0\leq i \leq m. & \\
	\end{array}
	\]
	
For all $0\leq i \leq m$ identity (\ref{[h,[m,f]]}) with $v=v_i$ implies $\eta^{11}_{i+1}=\eta^{11}_i-2\phi^{11}$ and $\eta_i^{12}=\eta_{i-1}^{12}-2\phi^{12}$. Therefore, $\eta^{11}_i=\eta^{11}_0-2i\phi^{11}$ and $\eta_i^{12}=-2(i+1)\phi^{12}$ for all $0\leq i \leq m$. Identity (\ref{[h,[m,f]]}) for $v=v_{m+1}$ implies 	$\eta^{11}_{n}=\eta^{11}_{n-1}-2\phi^{11}$. 	Denoting $\eta^{11}:=\eta^{11}_0$ yields $ \bl h,v_i\br =(\eta^{11}-2i\phi^{11})v_i-2i\phi^{12} w_{i-1}$ for $0\leq i \leq n$.
	
Consider identity (\ref{[h,[m,f]]}) for $v=w_i$ and $0\leq i \leq m-1$  to obtain $\eta^{21}_{i+2}=\eta^{21}_{i+1}-2i\phi^{21}$ and $  \eta^{22}_{i+1}=\eta^{22}_i-2\phi^{22}$.

Same identity for $v=w_m$ implies  $\eta^{21}_{m+1}=2\phi^{21}$.
Then
\[\eta^{21}_{i+1}=2(m+1-i)\phi^{21}, \quad \eta^{22}_i=\eta^{22}_0-2i\phi^{22}, \quad 0\leq i \leq m.\]

\textit{\textbf{Case 3.}} For $\ell\geq2$ by Proposition \ref{proposition1} there are the following brackets:
		$$\begin{array}{lll}
		\bl h,v_i \br =\eta_i^{11} v_i , \,\, 0\leq i\leq \ell-1, & \\ 	[1mm]
	
	\bl h,v_i \br =\eta_i^{11} v_i  +\eta_{-\ell+i}^{12}w_{-\ell+i}, \,\, \ell\leq i\leq m+\ell, & \\ [1mm]
	
	\bl h,v_i \br =\eta_i^{11} v_i, \,\, m+\ell+1\leq i\leq n, & \\ [1mm]
	
	\bl h,w_j\br =\eta_{\ell+j}^{21}v_{\ell+j} +\eta_{j}^{22}w_j, \,\, 0\leq j\leq m. &
	\end{array}$$

	Considering identity (\ref{[h,[m,f]]}) for $v=v_i$, $0\leq i\leq n-1$ implies 
	$\eta^{11}_i=\eta^{11}_0-2i\phi^{11}$ for $1\leq i \leq n$ and $\eta_j^{12}=0$ for all $0\leq j \leq m$. Substitute $\eta^{11}:=\eta_0^{11}$ to obtain $\bl h,v_i\br =(\eta^{11}-2i\phi^{11})v_i$ for all $0 \leq i \leq n$.

	Next, identity (\ref{[h,[m,f]]}) for $v=w_i, \ 0\leq i\leq m-1$ implies 
		\[\eta^{21}_{\ell}=\eta^{21}_{\ell+1}=\dots=\eta^{21}_{\ell+m}, \quad \eta^{22}_i=\eta^{22}_0-2\phi^{22} i, \quad 1\leq i \leq m.	\]
	 For $i=m$ we get $\eta^{21}_{\ell+m}=0$. Denoting by $\eta^{22}:=\eta^{22}_0$ yields $\bl h,w_j\br =(\eta^{22}-2\phi^{21}j)w_j$ for all $0\leq j \leq m$, and the proof is complete. 
 \end{proof}

 By Propositions \ref{proposition_f}, \ref{proposition_e} and \ref{proposition_h} for the case $\ell\geq 2$ we have the following table of brackets:
$$	\begin{array}{ll}
\,	 \bl f,v_i\br =\phi^{11} v_{i+1}   & 0\leq i \leq n-1\\
\,	 \bl f,w_j\br =\phi^{22} w_{j+1}           & 0\leq j \leq m-1\\		
\,	 \bl e,v_i\br =\displaystyle\frac{i(n-i+1)}{n}\epsilon^{11}v_{i-1} &         0\leq i \leq n-1    \\ 
\,	 \bl e,w_j\br =\displaystyle\frac{j(m-j+1)}{m}\epsilon^{22}w_{j-1} &       0\leq j \leq m-1             \\
\,	 \bl h,v_i\br =(\eta^{11}-2i\phi^{11})v_i & 0\leq i \leq n \\ 
\,	 \bl h,w_i\br =(\eta^{22}-2i\phi^{21})w_i & 0\leq i \leq m 
\end{array}.$$	 
The $\mathfrak{sl}_2$-modules $V$ and $W$ become $\mathfrak{sl}_2$-bimodules and the following is immediate. 
\begin{corollary}\label{l_geq_2}
	Let $\ell \geq 2$. Then $M$ is decomposable $\mathfrak{sl}_2$-bimodule.
\end{corollary}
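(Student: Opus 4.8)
The plan is to use the explicit table of brackets that precedes the statement, together with the right-action formulas fixed at the start of the section, to exhibit a direct sum decomposition $M = V' \oplus W'$ into two $\mathfrak{sl}_2$-subbimodules. The key observation is that for $\ell \geq 2$ the left actions $\bl f,v_i\br$, $\bl e,v_i\br$, $\bl h,v_i\br$ all land inside $V$ (no $w_j$ components survive), and likewise $\bl f,w_j\br$, $\bl e,w_j\br$, $\bl h,w_j\br$ all land inside $W$; moreover the right actions already preserve $V$ and $W$ separately by construction. Hence $V$ itself is an $\mathfrak{sl}_2$-subbimodule of $M$ and so is $W$, and since $M = V \oplus W$ as a vector space we immediately get that $M$ is decomposable.

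Concretely, first I would remark that an $\mathfrak{sl}_2$-subbimodule is precisely a subspace stable under both the right action $\rl -,-\rr$ and the left action $\bl -,-\br$. Second, I would note that $V = \Span\{v_0,\dots,v_n\}$ is stable under the right action of $e,f,h$ by the defining formulas for $\rl v_i, -\rr$, and under the left action of $e,f,h$ by inspection of the table ($\bl f,v_i\br = \phi^{11}v_{i+1}$, $\bl e,v_i\br = \frac{i(n-i+1)}{n}\epsilon^{11}v_{i-1}$, $\bl h,v_i\br = (\eta^{11}-2i\phi^{11})v_i$, each a multiple of a basis vector of $V$). Hence $V$ is an $\mathfrak{sl}_2$-subbimodule. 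Third, the identical argument applied to $W = \Span\{w_0,\dots,w_m\}$ using the $w$-rows of the table shows $W$ is an $\mathfrak{sl}_2$-subbimodule. Then $M = V \oplus W$ as $\mathfrak{sl}_2$-bimodules, so $M$ is decomposable by definition.

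There is essentially no obstacle here: the real work has already been done in Propositions \ref{proposition_f}, \ref{proposition_e} and \ref{proposition_h}, which forced all the "cross" coefficients $\phi^{12},\phi^{21},\epsilon^{12},\epsilon^{21},\eta^{12},\eta^{21}$ to vanish when $\ell \geq 2$. The only thing to be slightly careful about is the degenerate case $m=0$ (so $W$ is one-dimensional and some formulas have empty index ranges), but this is already covered by the last remark in the proof of Proposition \ref{proposition_e}, and in that case $W$ is trivially a subbimodule since all its left actions vanish. I would therefore keep the proof to one or two sentences: cite Propositions \ref{proposition_f}--\ref{proposition_h} (equivalently the displayed table) to conclude that $V$ and $W$ are $\mathfrak{sl}_2$-subbimodules of $M$ with $M = V\oplus W$, whence $M$ is decomposable.
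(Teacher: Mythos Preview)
Your proposal is correct and follows essentially the same approach as the paper: the paper displays the table of brackets derived from Propositions~\ref{proposition_f}, \ref{proposition_e} and~\ref{proposition_h}, remarks in one line that ``the $\mathfrak{sl}_2$-modules $V$ and $W$ become $\mathfrak{sl}_2$-bimodules,'' and declares the corollary immediate. Your write-up simply spells out that one-line observation in more detail, noting that both the right and left actions preserve $V$ and $W$ separately because all cross coefficients have been shown to vanish for $\ell\geq 2$.
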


\begin{proposition}\label{l=1}
	Let $\ell=1.$ Then the following holds:
	$$\begin{array}{llll}
	\bl h,v_i\br =(n-2i)\phi^{11}v_i-2i\phi^{12}w_{i-1}, \ 0 \leq i \leq n,  \\ [1mm]
	\bl f,v_i\br =\phi^{11}v_{i+1}+\phi^{12}w_i, \ 0\leq i \leq n-1, \\ [1mm]
	\bl e,v_i\br =-i(n-i+1)\phi^{11}v_{i-1} + i(i-1)\phi^{12}w_{i-2}, \ 1\leq i \leq n, \\  [1mm]
	\bl h,w_i\br =2(m-i+1)\phi^{21}v_{i+1}+(m-2i)\phi^{22}w_{i}, \ 0 \leq i \leq m,  \\ [1mm]
	\bl f,w_i\br =\phi^{21}v_{i+2}+\phi^{22}w_{i+1}, \ 0\leq i \leq m, \\ [1mm]
	\bl e,w_i\br =(m-i+1)((m-i+2)\phi^{21}v_{i} -i\phi^{22}w_{i-1}), \ 0\leq i \leq m. \\
	\end{array}$$

\end{proposition}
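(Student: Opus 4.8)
The plan is to exploit the two consequences of identity \eqref{bluered} that have not yet been used, namely \eqref{[f,[m,e]]} and \eqref{[h,[m,e]]}; the other six of the eight identities \eqref{[h,[m,h]]}--\eqref{[h,[m,e]]} were already consumed in Propositions \ref{proposition1}, \ref{proposition_f}, \ref{proposition_e} and \ref{proposition_h}. In the case $\ell=1$ those propositions already express all nine left actions through the ten scalars $\eta^{11},\eta^{22},\phi^{11},\phi^{12},\phi^{21},\phi^{22},\epsilon^{11},\epsilon^{12},\epsilon^{21},\epsilon^{22}$, and the table in the statement is exactly what one obtains after the six eliminations
\[
\eta^{11}=n\phi^{11},\quad \eta^{22}=m\phi^{22},\quad \epsilon^{11}=-n\phi^{11},\quad \epsilon^{12}=2\phi^{12},\quad \epsilon^{21}=(m+1)(m+2)\phi^{21},\quad \epsilon^{22}=-m\phi^{22}.
\]
So the whole proof reduces to producing these six relations and substituting back.

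First I would put $v=v_i$ and then $v=w_j$ into \eqref{[f,[m,e]]}. Because $\rl v_i,e\rr=-i(n-i+1)v_{i-1}$ and $\rl w_j,e\rr=-j(m-j+1)w_{j-1}$, the left-hand sides merely re-use $\bl f,v_{i-1}\br$ and $\bl f,w_{j-1}\br$ from Proposition \ref{proposition_f}, while the right-hand sides are $\rl\bl f,v_i\br,e\rr+\bl h,v_i\br$ and $\rl\bl f,w_j\br,e\rr+\bl h,w_j\br$, where the right $\mathfrak{sl}_2$-action is that of Theorem \ref{irreducible} and one uses $n=m+2$. Matching the coefficient of $v_i$ in the first substitution forces $\eta^{11}=n\phi^{11}$, while the coefficient of $w_{i-1}$ reduces, once $n=m+2$, to an automatically satisfied relation; symmetrically the second substitution forces $\eta^{22}=m\phi^{22}$ and its $v_{j+1}$-coefficient comparison is automatic. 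Feeding $\eta^{11}=n\phi^{11}$ and $\eta^{22}=m\phi^{22}$ into Proposition \ref{proposition_h} already yields the $\bl h,v_i\br$ and $\bl h,w_i\br$ rows of the table.

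Next I would put $v=v_i$ and $v=w_j$ into \eqref{[h,[m,e]]}. Now the left-hand sides re-use $\bl h,v_{i-1}\br$ and $\bl h,w_{j-1}\br$ in the forms just obtained, and the right-hand sides are $\rl\bl h,v_i\br,e\rr+2\bl e,v_i\br$ and $\rl\bl h,w_j\br,e\rr+2\bl e,w_j\br$. Matching the coefficients of $v_{i-1}$ and of $w_{i-2}$ in the first substitution gives $\epsilon^{11}=-n\phi^{11}$ and $\epsilon^{12}=2\phi^{12}$; matching the coefficients of $v_j$ and of $w_{j-1}$ in the second gives $\epsilon^{21}=(m+1)(m+2)\phi^{21}$ and $\epsilon^{22}=-m\phi^{22}$. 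Substituting these six relations into Propositions \ref{proposition_f}, \ref{proposition_e} and \ref{proposition_h} produces the remaining rows, the boundary bracket $\bl f,w_m\br=\phi^{21}v_n$ of Proposition \ref{proposition_f} being the $i=m$ case of the general $\bl f,w_i\br$-row since $w_{m+1}=0$.

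The step I expect to demand the most care is the index bookkeeping at the ends of the ranges: for some substitutions one of $v_{i-1},w_{i-2},v_{j+1},w_j$ vanishes (at $i=0,1$ or $i=n$, and at $j=0,m$), so one must verify that those degenerate instances of \eqref{[f,[m,e]]} and \eqref{[h,[m,e]]} add no constraint beyond the six relations above and are consistent with the separate boundary brackets $\bl h,v_0\br$, $\bl h,v_n\br$, $\bl e,v_1\br$, $\bl e,w_0\br$ recorded in the proofs of Propositions \ref{proposition_e} and \ref{proposition_h}. Beyond that the argument is a routine comparison of coefficients in the weight basis $v_0,\dots,v_n,w_0,\dots,w_m$, and the stated table drops out by substitution.
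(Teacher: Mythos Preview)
Your proposal is correct and follows essentially the same route as the paper's proof: both arguments use only identities \eqref{[f,[m,e]]} and \eqref{[h,[m,e]]} to eliminate $\eta^{11},\eta^{22},\epsilon^{11},\epsilon^{12},\epsilon^{21},\epsilon^{22}$ in favour of the $\phi$'s and then substitute back into Propositions \ref{proposition_f}, \ref{proposition_e}, \ref{proposition_h}. The only cosmetic difference is that the paper evaluates at a few specific indices (e.g.\ $v=v_n,w_m$ in \eqref{[f,[m,e]]} and $v=w_0,w_1$ in \eqref{[h,[m,e]]}) rather than at a generic $i$, and it treats \eqref{[h,[m,e]]} before \eqref{[f,[m,e]]}; neither choice affects the substance of the argument.
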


\begin{proof}
	Propositions \ref{proposition_f}, \ref{proposition_e} and \ref{proposition_h} for $\ell=1$ provides the following brackets:
$$	\begin{array}{ll}
\,		\bl f,v_i\br =\phi^{11} v_{i+1}+\phi^{12} w_i & 0\leq i \leq n-1\\
\,		\bl f,w_j\br =\phi^{21}v_{i+2}+\phi^{22} w_{j+1}               & 0\leq j \leq m-1\\		
\,	 \bl e,v_i\br = i\left( \displaystyle\frac{(n-i+1)}{n}\epsilon^{11}v_{i-1}+ \frac{i+1}{2}\epsilon^{12}w_{i-2} \right) &         0\leq i \leq n-1    \\ 
\,	 \bl e,w_j\br =(m+1-j)\left(\displaystyle\frac{m+2-j}{(m+2)(m+1)}\epsilon^{21}v_j+\frac{j}{m}\epsilon^{22}w_{j-1}\right) &       0\leq j \leq m-1             \\
\,	 \bl h,v_i\br =(\eta^{11}-2i\phi^{11})v_i-2i\phi^{12} w_{i-1} & 0\leq i \leq n \\ 
\,	\bl h,w_i\br =2(m+1-i)\phi^{21} v_{i+1}+(\eta^{22}-2i\phi^{22})w_i & 0\leq i \leq m
\end{array}.$$

Identity (\ref{[h,[m,e]]}) for $v=v_i, \ 2 \leq i \leq n$ implies 	$\epsilon^{11}=-n\phi^{11}$ and $\displaystyle \epsilon^{12}=\frac{2(i-1)}{i+1}\phi^{12}.$
	Thus we have  $\bl e,v_i\br =-i(n-i+1)\phi^{11} v_{i-1}+i(i-1)\phi^{12}w_{i-2}$ for $1\leq i \leq n.$
	
	Similarly, considering identity (\ref{[h,[m,e]]}) for $v=w_0$ and $v=w_1$ yields $\epsilon^{21}=n(m+1)\phi^{21}$ and $\epsilon^{22}=-m\phi^{22}$, correspondingly.

One gets $\eta^{11}=n\phi^{11}$ using identity (\ref{[f,[m,e]]}) for $v=v_n$. This implies $\bl h,v_i\br =(n-2i)\phi^{11}v_i-2i\phi^{12}w_{i-1}$ for all $0\leq i \leq n$.
	
	Consider identity (\ref{[f,[m,e]]}) for $v=w_m$ to obtain $\eta^{22}=m\phi^{22}$ which completes the proof of the proposition.	
\end{proof}

\begin{proposition}\label{l=0} Let $\ell=0$. Then the following holds:
	$$\begin{array}{lll}
	\bl h,v_i\br =(n-2i)(\phi^{11} v_i+\phi^{12} w_i), \,\, 0\leq i\leq n, & \\ [1mm]
	
	\bl f,v_i\br =\phi^{11} v_{i+1}+\phi^{12} w_{i+1}, \,\, 0\leq i\leq n-1, & \\ [1mm]
	
	\bl e,v_i\br =-i(n-i+1)(\phi^{11} v_{i-1}+\phi^{12} w_{i-1}), \,\, 1\leq i\leq n, \\
	[3mm]
	
	\bl h,w_i\br =(n-2i)(\phi^{21} v_i+\phi^{22} w_i), \,\, 0\leq i\leq n, &\\
	[1mm]
	
	\bl f,w_i\br =\phi^{21} v_{i+1}+\phi^{22} w_{i+1}, \,\, 0\leq i\leq n-1, & \\
	[1mm]
	\bl e,w_i\br =-i(n-i+1)(\phi^{21} v_{i-1}+\phi^{22} w_{i-1}), \,\, 1\leq i\leq n. \\
	\end{array}$$
\end{proposition}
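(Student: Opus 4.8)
The plan is to run, for $\ell=0$ (equivalently $n=m$), the same argument used in the proof of Proposition~\ref{l=1}. Propositions~\ref{proposition_f}, \ref{proposition_e} and \ref{proposition_h} specialized to $\ell=0$ already fix the shape of every bracket: $\bl f,v_i\br=\phi^{11}v_{i+1}+\phi^{12}w_{i+1}$, $\bl f,w_i\br=\phi^{21}v_{i+1}+\phi^{22}w_{i+1}$ for $0\le i\le n-1$; $\bl e,v_i\br=\frac{i(n-i+1)}{n}(\epsilon^{11}v_{i-1}+\epsilon^{12}w_{i-1})$, $\bl e,w_i\br=\frac{i(n-i+1)}{n}(\epsilon^{21}v_{i-1}+\epsilon^{22}w_{i-1})$ for $1\le i\le n$; and $\bl h,v_i\br=(\eta^{11}-2i\phi^{11})v_i+(\eta^{12}-2i\phi^{12})w_i$, $\bl h,w_i\br=(\eta^{21}-2i\phi^{21})v_i+(\eta^{22}-2i\phi^{22})w_i$ for $0\le i\le n$. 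So the whole task is to express the scalars $\eta^{1k},\epsilon^{1k}$ (and their $w$-counterparts $\eta^{2k},\epsilon^{2k}$) in terms of the $\phi$'s, and the only identities among (\ref{[h,[m,h]]})--(\ref{[h,[m,e]]}) not yet used are (\ref{[f,[m,e]]}) and (\ref{[h,[m,e]]}), which are exactly the two I would now feed in.

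First I would substitute $v=v_i$, $1\le i\le n$, into identity (\ref{[h,[m,e]]}). Since $\rl v_i,e\rr=-i(n-i+1)v_{i-1}$ and $\rl w_i,e\rr=-i(n-i+1)w_{i-1}$, both sides are scalar multiples of $v_{i-1}$ and $w_{i-1}$; cancelling the common nonzero factor $i(n-i+1)$ makes the $i$-dependence drop out and leaves $\epsilon^{11}=-n\phi^{11}$ and $\epsilon^{12}=-n\phi^{12}$, which turns the $e$-action on $V$ into $\bl e,v_i\br=-i(n-i+1)(\phi^{11}v_{i-1}+\phi^{12}w_{i-1})$. Next I would substitute $v=v_n$ into identity (\ref{[f,[m,e]]}); because $v_{n+1}=w_{n+1}=0$ we have $\bl f,v_n\br=0$, so the identity reduces to $-n\,\bl f,v_{n-1}\br=\bl h,v_n\br$, and comparing coefficients of $v_n$ and $w_n$ gives $\eta^{11}=n\phi^{11}$, $\eta^{12}=n\phi^{12}$, hence $\bl h,v_i\br=(n-2i)(\phi^{11}v_i+\phi^{12}w_i)$. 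Running the identical two substitutions with $v_i$ replaced by $w_i$ (so that $\phi^{11},\phi^{12},\epsilon^{11},\epsilon^{12},\eta^{11},\eta^{12}$ become $\phi^{21},\phi^{22},\epsilon^{21},\epsilon^{22},\eta^{21},\eta^{22}$) produces the three remaining $w$-formulas.

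I do not expect a genuine obstacle here: the argument is routine linear bookkeeping once the right two identities are selected. The only points needing a moment's attention are confirming that the cancelled factor $i(n-i+1)$ is nonzero over the index range used (it is, for $1\le i\le n$), and dealing with the degenerate case $n=m=0$ separately, where the right $\mathfrak{sl}_2$-action is trivial, all left brackets vanish, and the asserted formulas hold vacuously.
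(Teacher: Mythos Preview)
Your proposal is correct and follows essentially the same approach as the paper: both invoke Propositions~\ref{proposition_f}, \ref{proposition_e}, \ref{proposition_h} for $\ell=0$ and then pin down the $\eta$'s and $\epsilon$'s using identities (\ref{[f,[m,e]]}) and (\ref{[h,[m,e]]}) at $v=v_n$ (resp.\ $w_n$). The only differences are cosmetic---you run (\ref{[h,[m,e]]}) for general $i$ and then observe the $i$-dependence cancels, whereas the paper plugs in $i=n$ directly, and you also flag the degenerate case $n=m=0$, which the paper leaves implicit.
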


\begin{proof}

	By Propositions \ref{proposition_f}, \ref{proposition_e} and \ref{proposition_h} for the case $l=0$ we have following:
	$$\begin{array}{llll}
	\bl h,v_i\br =(\eta^{11}-2i\phi^{11})v_i+(\eta^{12}-2i\phi^{12})w_i, \,\, 0\leq i\leq n,\\ [1mm]
	\bl f,v_i\br =\phi^{11} v_{i+1}+\phi^{12} w_{i+1}, \,\, 0\leq i\leq n-1, \\ [1mm]
	\bl e,v_i\br =\displaystyle\frac{i(n-i+1)}{n}(\epsilon^{11} v_{i-1}+\epsilon^{12} w_{i-1}), \,\, 1\leq i\leq n,\\ [1mm]
	\bl h,w_i\br =(\eta^{21}-2i\phi^{21})v_i+(\eta^{22}-2i\phi^{22})w_i, \,\, 0\leq i\leq n,\\ [1mm]
	\bl f,w_i\br =\phi^{21} v_{i+1}+\phi^{22} w_{i+1}, \,\, 0\leq i\leq n-1, \\ [1mm]
	\bl e,w_i\br =\displaystyle\frac{i(n-i+1)}{n}(\epsilon^{21}v_{i-1}+\epsilon^{22} w_{i-1}), \,\, 1\leq i\leq n.\\
	\end{array}$$
	
	Consider identity (\ref{[f,[m,e]]}) for $m=v_n$ and $m=w_n$ to obtain $\eta^{11}=\phi^{11} n,\eta=\phi^{12} n$ and $\eta^{21}=\phi^{21} n, \eta^{22}=\phi^{22} n$, correspondingly. Analogously, identity (\ref{[h,[m,e]]}) for $m=v_n$ and  $m=w_n$ implies $\epsilon^{11}=-\phi^{11} n, \epsilon^{12}=-\phi^{12} n$ and $\epsilon^{21}=-\phi^{21} n, \epsilon^{22}=-\phi^{22} n$, correspondingly. This completes the proof. 
\end{proof}

\section{Results from identity (\ref{blue})}

\begin{proposition}\label{l=0_blue}
    Let $n=m.$ Then $M$ is split  as a Leibniz bimodule. 
\end{proposition}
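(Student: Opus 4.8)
The plan is to use the structural form obtained in Proposition \ref{l=0} together with identity (\ref{blue}) to reduce the entire bimodule to a single diagonalizable $2\times 2$ matrix, and then to split $M$ along the eigenspaces of that matrix.

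First I would set up coordinates. Since $\ell=0$, the modules $V$ and $W$ are isomorphic as $\mathfrak{sl}_2$-modules; identify both with one fixed simple module $U=\langle x_0,\dots,x_n\rangle$ via $v_i\leftrightarrow x_i\leftrightarrow w_i$ and write $M=\mathbb{K}^2\otimes U$ with $v_i=\mathbf{v}\otimes x_i$ and $w_i=\mathbf{w}\otimes x_i$. Let $\rho(x)\in\End(U)$ be the operator sending $x_i$ to the right-hand side of the $\rl v_i,x\rr$-formula, so that $\rho(h)x_i=(n-2i)x_i$, $\rho(f)x_i=x_{i+1}$, $\rho(e)x_i=-i(n-i+1)x_{i-1}$. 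Then $\rl -,x\rr=\mathrm{id}\otimes\rho(x)$, and inspecting the six formulas of Proposition \ref{l=0} shows that $\bl x,-\br=\Phi\otimes\rho(x)$, where $\Phi\colon\mathbb{K}^2\to\mathbb{K}^2$ is the scalar operator with $\Phi\mathbf{v}=\phi^{11}\mathbf{v}+\phi^{12}\mathbf{w}$ and $\Phi\mathbf{w}=\phi^{21}\mathbf{v}+\phi^{22}\mathbf{w}$. Thus the whole bimodule is encoded by the single matrix $\Phi$.

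Next I would impose identity (\ref{blue}). For every $m\in M$ and $x,y\in\mathfrak{sl}_2$,
\[
\bl x,\rl m,y\rr+\bl y,m\br\br=\big(\Phi\otimes\rho(x)\big)\big((\mathrm{id}+\Phi)\otimes\rho(y)\big)\,m=\big(\Phi(\mathrm{id}+\Phi)\big)\otimes\big(\rho(x)\rho(y)\big)\,m ,
\]
and this must vanish identically. Taking $x=f$ and $y=e$, the operator $\rho(f)\rho(e)$ is nonzero whenever $n\geq 1$ (it sends $x_1$ to $-n\,x_1$), so $\Phi(\mathrm{id}+\Phi)=0$, i.e. $\Phi^2+\Phi=0$; conversely this relation makes the displayed expression vanish for all $x,y$, so identity (\ref{blue}) imposes exactly this one matrix condition and nothing finer. (The remaining case $n=m=0$ is immediate and I would dispose of it at the outset: there $M$ is two-dimensional and Proposition \ref{l=0} already forces $\bl x,v_0\br=\bl x,w_0\br=0$ for all $x$, so $M=V\oplus W$ is split.)

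Finally, since $t^2+t=t(t+1)$ has the distinct roots $0$ and $-1$, the relation $\Phi^2+\Phi=0$ forces $\Phi$ to be diagonalizable with eigenvalues in $\{0,-1\}$. Choosing a basis $\mathbf{v}',\mathbf{w}'$ of $\mathbb{K}^2$ consisting of eigenvectors of $\Phi$ and setting $M'=\mathbb{K}\mathbf{v}'\otimes U$, $M''=\mathbb{K}\mathbf{w}'\otimes U$, each of $M',M''$ is stable under $\mathrm{id}\otimes\rho(x)$ and under $\Phi\otimes\rho(x)$ (because $\Phi$ preserves its eigenlines), hence is an $\mathfrak{sl}_2$-subbimodule of $M$; on the $0$-eigenpart the left action vanishes (a symmetric simple bimodule) and on the $(-1)$-eigenpart it equals $-(\mathrm{id}\otimes\rho(x))$, the negative of the right action (an antisymmetric simple bimodule). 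In every case, including $\Phi=0$ (two symmetric pieces) and $\Phi=-\mathrm{id}$ (two antisymmetric pieces), $M=M'\oplus M''$ is a direct sum of proper subbimodules, i.e. $M$ is split. The only genuine work is the reduction in the preceding paragraph — substituting the basis vectors into (\ref{blue}) to extract $\Phi^2+\Phi=0$ and checking that $\rho(f)\rho(e)\neq 0$ for $n\geq 1$ — but this is short precisely because Proposition \ref{l=0} has already forced the left action into the uniform tensor shape $\Phi\otimes\rho(x)$, so no condition beyond a $2\times2$ matrix identity can arise; everything afterwards is elementary linear algebra.
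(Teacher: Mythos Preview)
Your proof is correct and, at bottom, coincides with the paper's: the paper applies identity (\ref{blue}) to $(h,v_0,h)$ and $(h,w_0,h)$ and obtains the four scalar equations of system (\ref{system}), which are precisely the entries of $\Phi(\mathrm{id}+\Phi)=0$ for your matrix $\Phi=\begin{pmatrix}\phi^{11}&\phi^{21}\\ \phi^{12}&\phi^{22}\end{pmatrix}$; it then finds the splitting by a short case analysis ($\phi^{12}\neq 0$; $\phi^{12}=0$ with $\phi^{11}=0$ or $\phi^{22}=0$), writing down explicit eigenvectors of $\Phi$ in each case.

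What you do differently is recognize from the outset the tensor factorization $M\cong\mathbb{K}^2\otimes U$ with $\rl-,x\rr=\mathrm{id}\otimes\rho(x)$ and $\bl x,-\br=\Phi\otimes\rho(x)$, so that identity (\ref{blue}) collapses to the single $2\times 2$ matrix equation $\Phi^2+\Phi=0$ and the splitting follows immediately from diagonalizability of $\Phi$ over the distinct roots $0,-1$. This buys you a uniform, case-free argument and makes transparent why the two summands are exactly a symmetric and an antisymmetric simple bimodule (or two of the same kind when $\Phi=0$ or $\Phi=-\mathrm{id}$). The paper's hands-on version has the complementary virtue of exhibiting the eigenvectors explicitly in the original $v_i,w_i$ basis, but the two arguments are the same computation viewed at different levels of abstraction. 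Your separate treatment of $n=m=0$ is also appropriate; the paper's derivation of system (\ref{system}) tacitly uses $n\geq 1$ through the factor $n^2$.
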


\begin{proof}
    Using Proposition \ref{l=0} let us verify the identity (\ref{blue}): 
    
    $0=\bl h,\rl v_0,h\rr +\bl h,v_0\br \br =n^2 ((1+\phi^{11} )\phi^{11} +\phi^{12} \phi^{21} )v_0+n^2((1+\phi^{11} )\phi^{12} +\phi^{12} \phi^{22} )w_0$,
    
        $0=\bl h,\rl w_0,h\rr +\bl h,w_0\br \br =n^2 (\phi^{21} \phi^{11} +(1+\phi^{22} )\phi^{21} )v_0+n^2 (\phi^{21} \phi^{12} +(1+\phi^{22} )\phi^{22} )w_0$.
        
   Thus, we have
    \begin{equation}\label{system}
    \left\{%
    \begin{array}{ll}
    (1+\phi^{11} )\phi^{11} +\phi^{12} \phi^{21} =0, \\
    \phi^{12} (1+\phi^{11} +\phi^{22} )=0, \\
    \phi^{21} (1+\phi^{11} +\phi^{22} )=0, \\
    \phi^{21} \phi^{12} +(1+\phi^{22} )\phi^{22} =0.
    \end{array}%
    \right.
        \end{equation}
    
    Note that, for $M$ to be indecomposable, $\phi^{12} $ and $\phi^{21} $ cannot be simultaneously zero. Therefore $1+\phi^{11} +\phi^{22} =0$ and substituting $\phi^{22}  = -(1+\phi^{11} )$ we obtain $\phi^{12}  \phi^{21}  = \phi^{11}  \phi^{22} $. Let us consider several cases.
    
\textit{\textbf{Case 1.}} Let $\phi^{12} \neq 0$. Set $x_i=(1+\phi^{11} )v_i+\phi^{12}  w_i$ and $y_i=\phi^{11}  v_i+\phi^{12}  w_i$ for $0\leq i \leq n$. Note that $X=\Span \{x_i\}_{0\leq i \leq n}, Y=\Span \{y_i\}_{0\leq i \leq n}$ are simple $\mathfrak{sl}_2$-modules since identities from Theorem \ref{irreducible} hold. Moreover, they are subbimodules of $M$ due to identities from (\ref{system}):
\begin{align*}
\bl h,x_i\br=&\bl h, (1+\phi^{11} )v_i+\phi^{12}  w_i\br& \\
=&((1+\phi^{11} )\phi^{11}  +\phi^{12} \phi^{21} )(n-2i) v_i + ((1+\phi^{11} +\phi^{22} )\phi^{12} )(n-2i)w_i=0,&\\
\bl h,y_i\br=&\bl h, \phi^{11}  v_i+\phi^{12}  w_i\br&\\
=&(\phi^{11}\phi^{11} +\phi^{12} \phi^{21} )(n-2i) v_i + (\phi^{11} +\phi^{22} )\phi^{12} (n-2i)w_i=-(n-2i)y_i.&
\end{align*}
Similarly, $\bl f,x_i\br=\bl e,x_i\br=0$ and $ \bl f,y_{i-1}\br=-y_i,\ \bl e,y_{i+1}\br=(i+1)(n-i)y_i$. Hence, $M$ is a direct sum of $X$ and $Y$ as subbimodules. 

\textit{\textbf{Case 2.}} Assume  $\phi^{12} =0$. Then $\phi^{11}  \phi^{22} =0$. Consider the following sub-cases.

\textit{Case 2.1.} Let $\phi^{11} =0$. Then  $\phi^{22} =-1$ and we have $\bl h,v_i\br =\bl f,v_i\br =\bl e,v_i\br =0$ for all $0\leq i\leq n$, and 
\begin{center}
	$\begin{array}{lll}
    \bl h,w_i\br =(n-2i)(\phi^{21}  v_i- w_i), \,\, 0\leq i\leq n, &\\
    [1mm] 

    \bl f,w_i\br =\phi^{21}  v_{i+1}-w_{i+1}, \,\, 0\leq i\leq n-1, & \\
    [1mm]
    \bl e,w_i\br =-i(n-i+1)(\phi^{21}  v_{i-1}-w_{i-1}), \,\, 1\leq i\leq n. \\
    \end{array}$

\end{center}
    
 Substitute $u_i=\phi^{21}  v_i- w_i$ for all $1\leq i \leq n$ and set $U=\Span\{u_i\}_{0\leq i \leq n}$. Note that $U$ is a Leibniz subbimodule of $M$ since $\rl u_i,h\rr=-\bl h,u_i\br=(n-2i)u_i, \ \rl u_{i-1},f\rr=-\bl f,u_{i-1}\br=u_i$ and $\rl u_{i+1},e\rr=\bl e,u_{i+1}\br =u_i$. Moreover, $M$ decomposes as the direct sum of its subbimodules $V$ and $U$. 

\textit{Case 2.2.} Let $\phi^{22} =0$. Then $\phi^{11} =-1$ and  for all $ 0\leq i\leq n$ we have
    $$\begin{array}{lll}
    \bl h,v_i\br=-(n-2i)v_i, & \bl h,w_i\br=(n-2i)\phi^{21}  v_i,\\ [1mm]

    \bl f,v_{i-1}\br=-v_i,  &  \bl f,w_{i-1}\br=\phi^{21}  v_i\\ [1mm]

   \bl e,v_{i+1}\br=(i+1)(n-i)v_i, & \bl e,w_{i+1}\br=\phi^{21} (i+1)(n-i)v_i, \\
    \end{array}$$
Similarly, $u_i=\phi^{21}  v_i+ w_i$ for all $1\leq i \leq n$ and set $U=\Span\{u_i\}_{0\leq i \leq n}$. Note that $U$ is a Leibniz subbimodule of $M$ since $\rl u_i,h\rr=(n-2i)u_i, \ \rl u_{i-1},f\rr=u_i, \ \rl u_{i+1},e\rr=-(i+1)(n-i)u_i  $ and $\bl h,u_i\br =\bl f,u_i\br =\bl e,u_i\br =0$. Moreover, $M$ decomposes as the direct sum of its subbimodules $V$ and $U$. 
\end{proof}

The following theorem characterizes the only indecomposable Leibniz $\mathfrak{sl}_2$-bimodules if we assume that as a Lie $\mathfrak{sl}_2$-module it is a direct sum of two simple $\mathfrak{sl}_2$-submodules. 
\begin{theorem}\label{main}
	An $\mathfrak{sl}_2$-module $M=V\oplus W$, where $V$ and $W$ are simple $\mathfrak{sl}_2$-modules is indecomposable as a Leibniz $\mathfrak{sl}_2$-bimodule if and only if $\dim V-\dim W=2$. Moreover, up to $\mathfrak{sl}_2$-bimodule isomorphism there are only two indecomposable $\mathfrak{sl}_2$-bimodules, which in basis $\{v_0,\dots,v_n,w_0,\dots, w_{n-2}\}$ have the following brackets:
	\begin{align*}
	&{\color{red}[}v_i , h{\color{red}]} \ = (n-2i)v_i  &           	\bl h,v_i \br &=-(n-2i)v_i -2iw_{i-1}  \\ 
	&{\color{red}[}v_{i} ,f{\color{red}]} \ =  v_{i+1}  &            \bl f,v_i\br&=-v_{i+1}+w_i\\ 
 M_1: \hspace{1cm}	&{\color{red}[}v_{i} ,e{\color{red}]} \, \ = -i(n-i+1)v_{i-1} &     \bl e,v_{i}\br&=i(n-i+1)v_{i-1}+i(i-1)w_{i-2}\\
	& {\color{red}[}w_j, h{\color{red}]} = (n-2-2j)w_j     &       \bl h,w_{j}\br&=0\\
	&  {\color{red}[}w_{j},f{\color{red}]} = w_{j+1} &  \bl f,w_{j}\br&=0\\ 
	&  {\color{red}[}w_{j},e{\color{red}]} \, = -j(n-1-j)w_{j-1} &  \bl e,w_{j}\br&=0
 	\end{align*}

	\begin{align*}
	&{\color{red}[}v_i , h{\color{red}]} \ = (n-2i)v_i  &           	\bl h,v_i \br &=0  \\ 
	&	{\color{red}[}v_{i} ,f{\color{red}]} \ =  v_{i+1}  &            \bl f,v_i\br&=0\\ 
 M_2: \hspace{1cm}	&{\color{red}[}v_{i} ,e{\color{red}]} \, \ = -i(n-i+1)v_{i-1} &     \bl e,v_{i}\br&=0\\
	& {\color{red}[}w_j, h{\color{red}]} = (n-2-2j)w_j     &       \bl h,w_{j}\br&=2(m-j+1)v_{j+1}-(m-2j)w_{j}\\
	&  {\color{red}[}w_{j},f{\color{red}]} = w_{j+1} &  \bl f,w_{j}\br&=v_{j+2}-w_{j+1}\\ 
	&  {\color{red}[}w_{j},e{\color{red}]} \, = -j(n-1-j)w_{j-1} &  \bl e,w_{j}\br&=(m-j+1)((m-j+2)v_{j} +iw_{j-1})
	\end{align*}

\end{theorem}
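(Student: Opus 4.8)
The plan is to reduce every case to the case $\ell=1$ (that is, $\dim V-\dim W=2$) and then extract the remaining constraints from identity~(\ref{blue}). Keeping the running assumption $n\ge m$, so $\ell\ge 0$, I first dispose of every value of $\ell$ other than $1$. If $\ell\notin\mathbb{Z}$, Proposition~\ref{proposition1} forces every cross coefficient to vanish, so the left action preserves each of $V$ and $W$; since the right action does as well, $V$ and $W$ are sub-bimodules and $M=V\oplus W$ is split. If $\ell=0$ (i.e.\ $n=m$), $M$ is split by Proposition~\ref{l=0_blue}, and if $\ell\ge2$, $M$ is decomposable by Corollary~\ref{l_geq_2}. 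Hence indecomposability of $M$ forces $\ell=1$; this is the ``only if'' direction, and the classification below produces indecomposable bimodules in this case, so the condition is also sufficient for such objects to exist.

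When $\ell=1$, Proposition~\ref{l=1} already reduces all six left brackets to the four scalars $\phi^{11},\phi^{12},\phi^{21},\phi^{22}$, so it remains to impose identity~(\ref{blue}), $\bl x,\rl m,y\rr+\bl y,m\br\br=0$, on basis vectors $m$. Evaluating it with $x=y=h$ at $m=v_0,v_1,w_0$ yields
\[
(1+\phi^{11})\phi^{11}=0,\qquad (1+\phi^{22})\phi^{22}=0,\qquad \phi^{12}\phi^{21}=0,
\]
\[
\phi^{12}\bigl(1+\phi^{11}+\phi^{22}\bigr)=0=\phi^{21}\bigl(1+\phi^{11}+\phi^{22}\bigr).
\]
Evaluating it with $x=f$, $y=e$ at $m=v_i$ gives a relation affine in $i$, and comparing its coefficients shows that $\phi^{12}\ne0$ forces $\phi^{11}=-1$ and $\phi^{22}=0$; the mirror computation with $x=e$, $y=f$ at $m=w_j$ shows that $\phi^{21}\ne0$ forces $\phi^{22}=-1$ and $\phi^{11}=0$. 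One should also record that the surviving choices of $\phi$'s really make all of~(\ref{bluered}) and~(\ref{blue}) hold; this verification is mechanical.

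If $\phi^{12}=\phi^{21}=0$, then by Proposition~\ref{l=1} both $V$ and $W$ are sub-bimodules and $M$ is decomposable, so for $M$ indecomposable exactly one of $\phi^{12},\phi^{21}$ is nonzero, in view of $\phi^{12}\phi^{21}=0$. Since $V\not\cong W$ as $\mathfrak{sl}_2$-modules, any $\mathfrak{sl}_2$-bimodule isomorphism between two such $M$'s preserves $V$ and $W$ and, by Schur's lemma, restricts to scalar multiplications $\alpha$ on $V$ and $\beta$ on $W$; a short computation shows this leaves $\phi^{11},\phi^{22}$ unchanged while sending $\phi^{12}\mapsto(\beta/\alpha)\phi^{12}$ and $\phi^{21}\mapsto(\alpha/\beta)\phi^{21}$. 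Hence the branch $\phi^{12}\ne0$ normalizes to $(\phi^{11},\phi^{12},\phi^{21},\phi^{22})=(-1,1,0,0)$, which is exactly the table $M_1$, and the branch $\phi^{21}\ne0$ normalizes to $(0,0,1,-1)$, which is exactly $M_2$. Finally I confirm that $M_1$ and $M_2$ are indecomposable and non-isomorphic: since $V\not\cong W$, the only $\mathfrak{sl}_2$-submodules of $M$ are $0,V,W,M$, and in $M_1$ only $0,W,M$ are stable under the left action (the left action on $v_1$ has nonzero $W$-component), so $W$ has no sub-bimodule complement; symmetrically $V$ has none in $M_2$; and $M_1\not\cong M_2$ because their unique proper nonzero sub-bimodules, $W$ and $V$, have different dimensions.

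The structural part — the reduction to $\ell=1$, the Schur-scalar normalization, and the indecomposability check — is short; the genuine labor, and the main obstacle, is the identity-(\ref{blue}) bookkeeping in the case $\ell=1$: writing down and solving the system, and keeping straight the boundary conventions $v_{-1}=w_{-1}=0$, the edge indices, and the degenerate subcase $\dim W=1$, in which $\phi^{22}$ drops out of every action.
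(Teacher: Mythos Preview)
Your proof is correct and follows essentially the same route as the paper: reduce to $\ell=1$ via Proposition~\ref{proposition1}, Corollary~\ref{l_geq_2} and Proposition~\ref{l=0_blue}, specialize Proposition~\ref{l=1}, impose identity~(\ref{blue}) to pin the four parameters down to two one-parameter families, rescale, and verify indecomposability. The only differences are cosmetic---you test~(\ref{blue}) on the triples $x=y=h$ whereas the paper uses $x=y=f$, you phrase the rescaling via Schur's lemma, and you add the observation $M_1\not\cong M_2$ and the caveat about $\dim W=1$---but the underlying argument is the same.
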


\begin{proof} By Corollary \ref{l_geq_2} and Propositions \ref{l=0_blue} it follows that unless $\ell=1$ bimodule $M$ is split. Therefore, an indecomposable bimodule might appear only if the difference in dimensions of $V$ and $W$ is 2. Using Proposition \ref{l=1} 	we have the following:
	$$\begin{array}{llll}
	\bl h,v_i\br =(n-2i)\phi^{11}v_i-2i\phi^{12}w_{i-1}, \ 0 \leq i \leq n,  \\ [1mm]
	\bl f,v_i\br =\phi^{11}v_{i+1}+\phi^{12}w_i, \ 0\leq i \leq n-1, \\ [1mm]
	\bl e,v_i\br =-i(n-i+1)\phi^{11}v_{i-1} + i(i-1)\phi^{12}w_{i-2}, \ 1\leq i \leq n, \\  [1mm]
	\bl h,w_i\br =2(m-i+1)\phi^{21}v_{i+1}+(m-2i)\phi^{22}w_{i}, \ 0 \leq i \leq m,  \\ [1mm]
	\bl f,w_i\br =\phi^{21}v_{i+2}+\phi^{22}w_{i+1}, \ 0\leq i \leq m, \\ [1mm]
	\bl e,w_i\br =(m-i+1)((m-i+2)\phi^{21}v_{i} -i\phi^{22}w_{i-1}), \ 0\leq i \leq m. \\
	\end{array}$$

 From identity (\ref{blue}) for $(f,v_0,f)$ and $(f,w_0,f)$ we obtain system (\ref{system}). Note that, if $1+\phi^{11}+\phi^{22}\neq 0$ then $\phi^{12}=\phi^{21}=0$, that is $M$ is decomposable as a Leibniz $\mathfrak{sl}_2$-module. 
 
 Therefore, we assume that  
 \begin{equation}\label{1+phi+phi}
 1+\phi^{11}+\phi^{22}= 0
 \end{equation}

From identity (\ref{blue}) we have 
 $$0=\bl f, \rl w_0,e\rr+\bl e,w_0\br \br=\bl f, (m+1)(m+2)\phi^{21}v_0\br=(m+1)(m+2)\phi^{21} \bl f , v_0 \br=(m+1)(m+2)\phi^{21}(\phi^{11}v_1+\phi^{12}w_0),$$	
which implies $\phi^{21}\phi^{11}=\phi^{21}\phi^{12}=0$.

\textbf{Case 1.} Let $\phi^{21}=0$. Let us consider the following subcases.

\

\textit{Case 1.1.} Let $\phi^{22}\neq 0$. Then from the last equation of system (\ref{system}) it follows that $\phi^{22}=-1$. Furthermore, system (\ref{system}) implies $\phi^{11}=0$.  From identity (\ref{blue}) for triple $(f,h, v_0)$ it follows that $\phi^{12}=0$.  Then $M$ is decomposable as a direct sum of $\mathfrak{sl}_2$-subbimodules $V$ and $W$. 
	
	\
	
\textit{Case 1.2.} Let 	$\phi^{22}= 0$. Then $\bl \mathfrak{sl}_2, W \br =0$. Using Proposition \ref{l=1} and identity (\ref{blue}) we have:
$$0=\bl h, \rl v_0,f\rr+\bl f,v_0\br \br=\bl h, (\phi^{11}+1)v_1+w_1\br=(\phi^{11}+1)((n-2)\phi^{11}v_1-2\phi^{12}w_0),$$	
which implies $\phi^{11}=-1$, since otherwise $M$ is not indecomposable. Note that, if $\phi^{12}=0$, then $V$ becomes a subbimodule and $M$ is decomposable. Therefore, rescaling the basis of $W$ by $\phi^{12}$ we obtain the product of $M_1$ from the statement of the theorem. 

$$\begin{array}{llll}
\bl h,v_i\br=-(n-2i)v_i-2iw_{i-1}, \ 0 \leq i \leq n,  \\ [1mm]
\bl f,v_i\br=-v_{i+1}+w_i, \ 0\leq i \leq n-1, \\ [1mm]
\bl e,v_i \br=i(n-i+1)v_{i-1} + i(i-1)w_{i-2}, \ 1\leq i \leq n \\
\end{array}$$

Let us prove that the only $\mathfrak{sl}_2$-subbimodule $N$ of $M_1$ in this case is $W$. Since $W$ is a simple $\mathfrak{sl}_2$-module, if $N \subseteq W$ then $N$ is trivial or $N=W$. Let us assume that $u=\displaystyle \sum_{i=0}^n \alpha_i v_i +\sum_{j=0}^{n-2}\beta_j w_j$ is an element of $N$ with not all $\alpha_i=0$.  Let $k$ be the largest index such that $\alpha_k\neq 0$. Then $\bl e,\rl \dots\rl \rl u,\underbrace{e\rr ,e\rr,\dots,e\rr}_{k \text{ times}}\br$ is a nonzero multiple of $v_0$. Hence, $v_0\in N$ and acting with $f$ from the right we obtain $V\subseteq N$. From $\bl f,v_0\br =-v_1+w_0$ it follows that $w_0\in N$. Similarly, $W\subseteq N$. Thus $N=M_1$ and the $\mathfrak{sl}_2$-bimodule $M_1$ is indecomposable. 

\

\textbf{Case 2.} Let $\phi^{21}\neq 0$. Then $\phi^{12}=0$. From indentity (\ref{blue}) we have:
$$0=\bl h, \rl w_0,e\rr+\bl e,w_0\br \br=\bl h, (m+1)(m+2)\phi^{21}v_0\br=(m+1)(m+2)\phi^{21} \bl h , v_0 \br=(m+1)(m+2)n\phi^{21}\phi^{11}v_1$$ 
that implies $\phi^{11}=0$. From (\ref{1+phi+phi}) it follows that $\phi^{22}=-1$. 

Rescaling the basis vectors of $V$ to $\phi^{21}$ we obtain the following:

$$\begin{array}{llll}
\bl h,v_i\br =\bl f,v_i\br =\bl e,v_i\br=0,\\
\bl h,w_i\br =2(m-i+1)v_{i+1}-(m-2i)w_{i}, \ 0 \leq i \leq m,  \\ [1mm]
\bl f,w_i\br =v_{i+2}-w_{i+1}, \ 0\leq i \leq m, \\ [1mm]
\bl e,w_i\br =(m-i+1)((m-i+2)v_{i} +iw_{i-1}), \ 0\leq i \leq m.\\
\end{array}$$
As in the first case, one can show that this $\mathfrak{sl}_2$-bimodule is indecomposable and we denote it by $M_2$. 
\end{proof}

\begin{remark} Note that in Theorem \ref{main} the $\mathfrak{sl}_2$-bimodule $M_1$ admits antisymmetric  $\mathfrak{sl}_2$-subbimodule $W$ and $M_1/W$ is a symmetric $\mathfrak{sl}_2$-bimodule. For $M_2$ the antisymmetric $\mathfrak{sl}_2$-subbimodule is $V$ and the quotient $M_2/V$ is a symmetric $\mathfrak{sl}_2$-bimodule. This is in accordance with the result of \cite[Section 4]{LP_Leib_rep} that claims the group $Ext^1_{UL(\mathfrak{sl}_2)}(M,N)=0$ for simple $\mathfrak{sl}_2$-bimodules $M$ and $N$ unless $M$ and $N$ are symmetric and antisymmetric $\mathfrak{sl}_2$-bimodules coresspondingly, of dimensions $n$ and $n-2$, or of dimensions $n$ and $n+2$, correspondingly.  Two  $\mathfrak{sl}_2$-bimodules of Theorem \ref{main} correspond to these nontrivial extensions of \cite{LP_Leib_rep}.
\end{remark}


\end{document}